\definecolor{dblue}{rgb}{0,0,.6}
\newtheorem{theorem}{Theorem}[section]
\theoremstyle{plain}
\newtheorem{conjecture}[theorem]{Conjecture}
\newtheorem{corollary}[theorem]{Corollary}
\newtheorem{example}[theorem]{Example}
\newtheorem{lemma}[theorem]{Lemma}
\newtheorem{remark}[theorem]{Remark}
\newcommand{\del}{\partial}
\newcommand{\Z}{\mathbb Z}
\newcommand{\C}{\mathbb C}
\newcommand{\R}{\mathbb R}
\newcommand{\Hom}{\operatorname{Hom}}
\newcommand{\Pic}{\operatorname{Pic}}
\newcommand{\Aut}{\operatorname{Aut}}
\newcommand{\Alb}{\operatorname{Alb}}
\newcommand{\codim}{\operatorname{codim}}
\newcommand{\supp}{\operatorname{supp}}
\newcommand{\red}{\operatorname{red}}
\newcommand{\Char}{\operatorname{Char}}
\newcommand{\Loc}{\operatorname{Loc}}
\begin{document}   

\title[Holomorphic one-forms and topology]{Zeros of holomorphic one-forms and topology of K\"ahler manifolds} 

\author[Stefan Schreieder]{Stefan Schreieder\\
Appendix written jointly with Hsueh-Yung Lin
}

\address{Mathematisches Institut, LMU M\"unchen, Theresienstr. 39, 80333 M\"unchen, Germany}
\email{schreieder@math.lmu.de}

\address{Mathematisches Institut, Universit\"at Bonn, Endenicher Allee 60, 53115 Bonn, Germany}
\email{linhsueh@math.uni-bonn.de}

\date{November 4, 2019}
\subjclass[2010]{primary 14F45, 32Q15, 32Q55; secondary  14F17} 
 

\keywords{Topology of algebraic varieties, one-forms, local systems, generic vanishing.}

\begin{abstract} 
A conjecture of Kotschick predicts that a compact K\"ahler manifold $X$ fibres smoothly over the circle if and only if it admits a holomorphic one-form without zeros.
In this paper we develop an approach to this conjecture and verify it in dimension two. 
In a joint paper with Hao \cite{HS}, we use our approach to prove Kotschick's conjecture for smooth projective threefolds.  
\end{abstract}

\maketitle

\section{Introduction}

This paper is motivated by the following conjecture of Kotschick \cite{Ko}.

\begin{conjecture} \label{conj1}
For a compact K\"ahler manifold $X$, the following are equivalent.
\begin{enumerate}[(A)]
\item $X$ admits a holomorphic one-form without zeros; \label{item:conj1:hol-1-form}
\item \label{item:conj1:circle}
$X$ admits a real closed $1$-form without zeros;
or, by Tischler's theorem \cite{Ti} equivalently, the underlying differentiable manifold is a  $C^\infty$-fibre bundle over the circle.
\end{enumerate}
\end{conjecture}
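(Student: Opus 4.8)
Since (A)$\Rightarrow$(B) is immediate — if $\omega\in H^0(X,\Omega^1_X)$ has no zeros then at each point $x$ the $\C$-linear functional $\omega_x\colon T_xX\to\C$ is onto, so $\RE(\omega_x)\neq 0$, and $\RE(\omega)$ is then a real closed one-form without zeros to which Tischler's theorem applies — I would concentrate on (B)$\Rightarrow$(A) for a compact K\"ahler surface $X$. The first move is to extract numerical constraints: if $X$ fibres smoothly over $S^1$ with fibre a closed $3$-manifold $M$, then $e(X)=e(M)\cdot e(S^1)=0$, and $\sigma(X)=0$ since a closed oriented $4$-manifold fibring over the circle has vanishing signature. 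Feeding $e(X)=c_2(X)$, Noether's formula $12\chi(\mathcal{O}_X)=c_1(X)^2+c_2(X)$, and the signature theorem $3\sigma(X)=c_1(X)^2-2c_2(X)$ into one another forces $K_X^2=0$ and $\chi(\mathcal{O}_X)=0$, hence $q(X)=1+p_g(X)\geq 1$, so $H^0(X,\Omega^1_X)\neq 0$.

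Next I run the Enriques--Kodaira classification. Surfaces of general type have $\chi(\mathcal{O})\geq 1$, so $\chi(\mathcal{O}_X)=0$ leaves the minimal model $X_{\min}$ a $\PP^1$-bundle over an elliptic curve, a two-dimensional complex torus, a bielliptic surface, or a minimal properly elliptic surface with $\chi(\mathcal{O})=0$; each of these has $e(X_{\min})=0$, and since blow-ups increase $e$, the constraint $e(X)=0$ forces $X=X_{\min}$. In the first three cases the Albanese map $X\to\Alb(X)$ is a holomorphic fibre bundle — the bundle projection, the identity, and (writing $X=(A\times B)/G$ in Bagnera--De Franchis form, where $q(X)=1$ because $G$ acts on $B$ by translations but nontrivially on $H^0(A,\Omega^1)$) the quotient $X\to B/G$. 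As every holomorphic one-form is pulled back from $\Alb(X)$ and the Albanese map is a submersion, the pullback of a nonzero invariant one-form has no zeros. The same argument settles the sub-case of the fourth type in which the elliptic fibration $f\colon X\to C$ has no multiple fibre, for then $e(X)=12\chi(\mathcal{O}_X)=0$ forces $f$ to be a smooth elliptic fibre bundle over a curve of genus $\geq 2$.

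This reduces the problem to a minimal properly elliptic $f\colon X\to C$ with $\chi(\mathcal{O}_X)=0$ \emph{and a multiple fibre} $m_iF_i$ (then $g(C)\geq 1$ by Noether, and $\kappa(X)=1$ gives $2g(C)-2+\sum_i(1-1/m_i)>0$). Here every holomorphic one-form pulled back from $C$ vanishes along the multiple fibres — locally $f=z^{m}$, so $f^*dt=mz^{m-1}dz$ — and one checks $q(X)=g(C)$, the ``vertical'' relative one-forms being the sections of a negative-degree line bundle on $C$; hence \emph{every} holomorphic one-form $\omega$ on $X$ has nonempty zero locus $Z(\omega)$. It therefore suffices to show such an $X$ does not fibre over $S^1$. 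If $g(C)\geq 2$ this is short: $q(X)=g(C)$ makes $f^*\colon H^1(C;\Q)\to H^1(X;\Q)$ an isomorphism, so any fibration class is $\phi=f^*\psi$, the associated infinite cyclic cover $X_\infty$ surjects onto the infinite cyclic cover $C_\psi$ of $C$, and the latter has infinite-dimensional $\Q$-homology when $g(C)\geq 2$ — whereas if $X$ fibred over $S^1$ then $X_\infty$ would be homotopy equivalent to the compact fibre and have finitely generated homology. The remaining case $g(C)=1$ is the crux and the step I expect to be hardest, because $C_\psi$ is then a cylinder with finitely generated homology, so the obstruction must be produced by the multiple fibre itself.

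For this I would prove the following principle, which is the real ``approach'' here and the input needed beyond surfaces (as in the threefold case with Hao): \emph{if $\omega$ is a holomorphic one-form on a compact K\"ahler manifold $X$ with $[\RE\omega]\in H^1(X;\Q)$ and $Z(\omega)\neq\emptyset$, then the homology of the infinite cyclic cover attached to $[\RE\omega]$ is not finitely generated over a suitable coefficient ring.} Granting this, in the remaining case any fibration class $\phi$ yields a holomorphic one-form $\omega=\phi^{1,0}$ with $2[\RE\omega]=\phi$ rational and $Z(\omega)\neq\emptyset$ by the above, so $X_\infty$ is not a finite complex and $X$ cannot fibre over $S^1$ — which, together with the earlier steps, proves the theorem in dimension two. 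For surfaces the principle is explicit: in $X_\infty\to C_\psi$ the multiple fibre pulls back to infinitely many, forcing $H_1(X_\infty;\Z/m)$ to be infinite-dimensional for the multiplicity $m$. In general I would tie $Z(\omega)$ to the cohomology of rank-one local systems: the Koszul complex of $\omega$ acting on $\Omega^\bullet_X$ is exact precisely where $\omega$ does not vanish, so $Z(\omega)\neq\emptyset$ makes its hypercohomology nonzero, and via the generic vanishing theory of Green--Lazarsfeld and Simpson this makes the cohomology of $X$ with coefficients in the rank-one local systems along the real line through $[\RE\omega]$ too large to coexist with $H_\ast(X_\infty)$ being finitely generated. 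Reconciling this twisted-Hodge input with the topological finiteness is the main obstacle, and the reason the unconditional statement remains open in higher dimensions.
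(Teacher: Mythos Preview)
Your approach differs genuinely from the paper's. The paper does not attack (B)$\Rightarrow$(A) directly; it introduces the cohomological condition (C) --- that some holomorphic one-form $\omega$ makes $(H^*(X',\C),\wedge\tau^*\omega)$ exact for every finite \'etale cover $\tau:X'\to X$ --- and factors the surface case as (B)$\Rightarrow$(C)$\Rightarrow$(A). The step (B)$\Rightarrow$(C) holds in all dimensions and is proved via Novikov's inequalities together with the structure of cohomology jump loci (generic vanishing); the step (C)$\Rightarrow$(A) for surfaces uses the classification as you do, but in the critical properly-elliptic case with multiple fibres over an elliptic base the paper passes to an orbifold cover $C'\to C$: the normalized base change $X'\to X$ is finite \'etale, $X'\to C'$ has no multiple fibres, and $g(C')\ge 2$ then contradicts exactness of $(H^*(X',\C),\wedge\omega')$. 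Your infinite-cyclic-cover obstruction is morally the same phenomenon seen from a different angle, but the detour through (C) is what buys the threefold result in the companion paper; your direct method does not obviously scale.

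Your surface argument is right in outline, but two steps are under-justified. First, the assertion $q(X)=g(C)$ in the properly elliptic case with $\chi(\OO_X)=0$ is not automatic: $R^1f_*\OO_X$ has degree zero, not negative, so $q(X)=g(C)+1$ can occur even in the presence of multiple fibres of type $mI_0$ (whose local monodromy is trivial). In that event there is a one-form $\omega$ restricting nontrivially to every fibre, and one must argue separately --- as the paper does via the Albanese map --- that it is nowhere zero, so that (A) holds directly and no obstruction is needed. Second, the claim that infinitely many multiple fibres in $X_\infty$ force $H_1(X_\infty;\Z/m)$ to be infinite-dimensional is plausible but not argued; you would need to exhibit independent $m$-torsion classes localized near the distinct lifts of the multiple fibre. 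Both gaps are repairable, so your direct route can be made rigorous for surfaces, but the paper's orbifold-cover argument is cleaner and the intermediate condition (C) is the structural contribution that your sketch only gestures at in its final paragraph.
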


The implication (\ref{item:conj1:hol-1-form}) $\Rightarrow$ (\ref{item:conj1:circle}) is clear; 
the possibility of the converse implication  (\ref{item:conj1:circle}) $\Rightarrow$ (\ref{item:conj1:hol-1-form}) is asked in \cite{Ko}. 
Condition (\ref{item:conj1:circle}) is equivalent to asking that the smooth manifold that underlies $X$ is a quotient $M\times [0,1]/\sim$, where $M$ is a closed real manifold of odd dimension and $M\times 0$ is identified with $M\times 1$ via some diffeomorphism of $M$.
Kotschick's conjecture relates this purely topological condition  with the complex geometric condition 
 that $X$ has a holomorphic one-form without zeros.

The purpose of this paper is to related Kotschick's conjecture to the following condition
\begin{enumerate}[(A)] 
\setcounter{enumi}{2}
\item there is a holomorphic one-form  $\omega \in H^0(X,\Omega_X^1)$, such that for any finite \'etale cover $\tau:X'\to X$, the sequence 
$$  H^{i-1}(X',\C)\xrightarrow{\wedge \omega'}  H^{i}(X',\C)\xrightarrow{\wedge \omega'}  H^{i+1}(X',\C) ,
$$
given by cup product with $\omega':=\tau^\ast \omega$,
is exact for all $i$.  
\label{item:conj1:exact}
\end{enumerate} 
This is motivated by a theorem of Green and Lazarsfeld \cite[Proposition 3.4]{GL}, who proved the implication (\ref{item:conj1:hol-1-form}) $\Rightarrow$ (\ref{item:conj1:exact}).
Our first result is the following, which in view of Green and Lazarsfeld's theorem yields some positive evidence for Conjecture \ref{conj1}.

\begin{theorem} \label{thm:B=>C}
For any compact K\"ahler manifold $X$, we have (\ref{item:conj1:circle}) $\Rightarrow$ (\ref{item:conj1:exact}).
\end{theorem}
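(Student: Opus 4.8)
The key observation is that condition (C) is stable under finite étale covers, so it suffices to produce, from a real closed one-form $\alpha$ without zeros on $X$, a holomorphic one-form $\omega$ such that wedging with $\omega$ makes the cohomology exact on $X$ itself and on all finite étale covers $X'$. Let me think about how to connect $\alpha$ to a holomorphic form.

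First I would use Tischler's theorem to get a smooth fibration $f : X \to S^1$ (after perturbing, $\alpha$ has rational periods) with $\alpha = f^*(d\theta)$, hence $[\alpha] \in H^1(X, \mathbb{Z})$ is a primitive integral class. Any finite étale cover $X'$ then also fibres over $S^1$ via $f \circ \tau$, and $\alpha' = \tau^*\alpha$ is again a nowhere-zero closed one-form with integral periods. The Hodge decomposition writes $[\alpha'] = [\omega'] + [\overline{\omega'}]$ for a holomorphic one-form $\omega'$ on $X'$; the question is whether cup product with $\omega'$ (equivalently, since $\wedge\overline{\omega'} = \wedge[\alpha'] - \wedge\omega'$ and $[\alpha']^2 = 0$... wait, that is not quite enough) gives an exact sequence.

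The real content — and the main obstacle — is to show the complex $(H^\bullet(X', \mathbb{C}), \wedge\omega')$ is exact. I would approach this via $L^2$-type or harmonic-theory arguments: the point is that $[\alpha']$ is represented by a closed form with no zeros, and one wants to deduce that the holomorphic piece $\omega'$ acts invertibly on cohomology in the derived sense. The natural tool is the theory of the algebraic de Rham complex twisted by $\omega'$, or better, a deformation argument: consider the family of flat connections $d + t\,\omega'$ (or the Higgs field given by $\omega'$) and relate the hypercohomology of $(\Omega_{X'}^\bullet, \wedge\omega')$ to the Betti cohomology of $X'$ with coefficients in a rank-one local system $\mathbb{C}_\rho$ for generic $\rho$ near the trivial character, which vanishes precisely because $\alpha'$ (hence $f\circ\tau$) has no zeros, so $X'$ is homotopy equivalent to a mapping torus and $H^\bullet(X', \mathbb{C}_\rho) = 0$ for $\rho$ nontrivial along the fibration. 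Then a degeneration/semicontinuity argument — comparing the twisted Dolbeault complex at the trivial character with these vanishing neighbours — forces the Koszul-type complex $(H^\bullet(X',\mathbb{C}), \wedge\omega')$ to be exact. Concretely: the hypercohomology of the complex $[\mathcal{O}_{X'} \xrightarrow{\omega'} \Omega^1_{X'} \xrightarrow{\omega'} \cdots]$ computes, by a standard spectral-sequence/degeneration argument (this uses the Kähler hypothesis crucially, à la Green–Lazarsfeld), the cohomology of the associated graded, i.e. of $(H^\bullet(X',\mathbb{C}), \wedge\omega')$; on the other hand it is the limit of $H^\bullet(X', \mathbb{C}_{\exp(t\omega')})$ which vanishes for $t\neq 0$ small because the mapping-torus structure kills the cohomology of any character nontrivial on the fibre class dual to $[\alpha']$.

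So the plan is: (i) reduce to producing one $\omega$ on each étale cover simultaneously, via Tischler; (ii) take $\omega'$ to be the $(1,0)$-part of the harmonic representative of the integral class $\tau^*[\alpha]$; (iii) identify the cohomology of $(H^\bullet(X',\mathbb{C}), \wedge\omega')$ with the hypercohomology of the twisted holomorphic de Rham complex and with a limit of twisted Betti cohomology groups; (iv) invoke the mapping-torus structure of $X'$ to see these twisted groups vanish for a generic nearby character, and conclude exactness. The hard part is step (iii) — making the limiting/degeneration argument rigorous on a general compact Kähler manifold, which is exactly where the work of Green–Lazarsfeld and generic vanishing theory enters, and where the Kähler (as opposed to merely complex) hypothesis is indispensable.
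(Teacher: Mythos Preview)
Your outline identifies the right ingredients---the Green--Lazarsfeld degeneration relating $(H^\ast(X',\C),\wedge\omega')$ to $H^\ast(X',L(\omega'))$, the production of a rank-one local system with no cohomology from the fibration over $S^1$, and an appeal to generic vanishing---but the way you assemble them has a real gap.

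The problem is your step (ii): you fix $\omega'$ to be the $(1,0)$-part of the class $[\alpha']$ and then try to prove that \emph{this particular} local system $L(\omega')$ has trivial cohomology. Your justification is a ``limit'' argument, claiming that the hypercohomology of $(\Omega_{X'}^\ast,\wedge\omega')$ is the limit of $H^\ast(X',\C_{\exp(t\omega')})$ as $t\to 0$ and that the latter vanish. But this is not how the identification works: by the holomorphic resolution one has $\mathbb H^\ast(X',(\Omega_{X'}^\ast,\partial+\wedge\omega'))=H^\ast(X',L(\omega'))$ \emph{on the nose}, not as a limit, and semicontinuity of $\dim H^\ast(X',L(t\omega'))$ goes the wrong way to conclude vanishing at a special value from vanishing nearby. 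Moreover, your mapping-torus claim that ``any character nontrivial on the fibre class'' has vanishing cohomology is not true in general: the Wang sequence for $X'\to S^1$ only gives vanishing when the value of the character on the base loop avoids the (finitely many) eigenvalues of the monodromy on the cohomology of the fibre with coefficients in the \emph{restricted} local system, and for $L(\omega')$ the restriction to the fibre $M$ need not be trivial (only $[\alpha']|_M=0$, not $[\omega']|_M=0$). So neither the vanishing nor its transfer to the specific $\omega'$ is established.

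The paper avoids this by not committing to a particular $\omega$. From the nowhere-vanishing real form $\alpha$ one obtains, via Novikov's inequalities, a single local system $L(t\alpha)\in\Loc^0(X)$ with $H^\ast(X,L(t\alpha))=0$ for $t\gg 0$. One then invokes the structure theorem for cohomology jump loci (they are finite unions of torsion-translated subtori in $\Loc^0(X)$): since the jump locus misses $L(t\alpha)$, it is a proper subvariety, and hence a \emph{general} holomorphic $\omega$ already satisfies $H^\ast(X,L(\omega))=0$. The Green--Lazarsfeld degeneration (your step (iii)) then gives exactness of $(H^\ast(X,\C),\wedge\omega)$. The passage to \'etale covers is handled by noting that $\tau^\ast\alpha$ is again nowhere-vanishing, so the same argument applies on each $X'$. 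The two key moves you are missing are: replace the specific $(1,0)$-part by a generic $\omega$, and replace the limit/semicontinuity heuristic by the structure theorem for jump loci.
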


By the above theorem, in order to prove Kotschick's conjecture, it would be enough to 
show that (\ref{item:conj1:exact}) implies (\ref{item:conj1:hol-1-form}).
Compared to the original implication (\ref{item:conj1:circle}) $\Rightarrow$ (\ref{item:conj1:hol-1-form}), this has the major advantage that (\ref{item:conj1:exact}) and (\ref{item:conj1:hol-1-form}) are complex geometric conditions, while (\ref{item:conj1:circle}) is not.
More precisely, it is natural to wonder whether a one-form $\omega\in H^0(X,\Omega_X^1)$ which satisfies condition (\ref{item:conj1:exact}) must be without zeros.
This would have the remarkable implication that the question whether $\omega$ has zeros depends only on the de Rham class of $\omega$ and the homotopy type of $X$.
We show that this is true for surfaces.

\begin{theorem} \label{thm:surfaces}
Let $X$ be a compact K\"ahler surface.
If $\omega\in H^0(X,\Omega_X^1)$ satisfies condition (\ref{item:conj1:exact}), then it has no zeros.
In particular, Conjecture \ref{conj1} holds for compact K\"ahler surfaces.
\end{theorem}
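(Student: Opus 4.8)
The engine of the argument is a numerical identity that falls out of condition (\ref{item:conj1:exact}) almost for free: applied to the trivial cover $\tau=\id_X$, it says the complex $(H^\bullet(X,\C),\wedge\omega)$ is acyclic, so its Euler characteristic $\sum_i(-1)^i b_i(X)=e(X)$ vanishes. Also $\omega\neq 0$ (otherwise the same exactness gives $H^\bullet(X,\C)=0$), so $q(X)\geq 1$. Consequently, once I know that $\omega$ has no divisorial zero locus I am done: its remaining zeros are then isolated, counted with positive multiplicities by $c_2(\Omega^1_X)=c_2(X)=e(X)=0$, hence absent. So the proof is organised around showing that a form satisfying (\ref{item:conj1:exact}) cannot vanish along a curve, and this is where the classification of surfaces enters.

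Before that I would extract two structural consequences of (\ref{item:conj1:exact}). First, $X$ is minimal: if $E\cong\PP^1\subset X$ were a $(-1)$-curve, contract it, $\pi\colon X\to X_0$, with $\omega=\pi^\ast\omega_0$. Then $\omega\wedge[E]=0$ in $H^3(X,\C)$ (push forward to $X_0$: $\pi_\ast(\pi^\ast\omega_0\wedge[E])=\omega_0\wedge\pi_\ast[E]=0$ as $\pi_\ast[E]=0$, and $\pi_\ast$ is injective on $H^3$), whereas $[E]\notin\im\bigl(\wedge\omega\colon H^1(X,\C)\to H^2(X,\C)\bigr)\subseteq\pi^\ast H^2(X_0,\C)$, because $[E]^2=-1$ while $\pi^\ast$-classes pair to zero with $[E]$; this contradicts exactness at $H^2$. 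Second, exactness at $H^1$ (for $X$ and for every finite \'etale cover) forces the only $\eta\in H^0(\Omega^1_{X'})$ with $\eta\wedge\tau^\ast\omega=0$ in $H^0(\Omega^2_{X'})$ to be the scalar multiples of $\tau^\ast\omega$; by Castelnuovo--de Franchis this means $\tau^\ast\omega$ is never pulled back from a curve of genus $\geq 2$, for any finite \'etale $\tau\colon X'\to X$.

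Now $X$ is a minimal compact K\"ahler surface with $e(X)=0$ and $q(X)\geq 1$, hence, by Enriques--Kodaira, a $\PP^1$-bundle over an elliptic curve, a complex $2$-torus, a bielliptic surface, or a minimal properly elliptic surface with $e=0$. In the first three cases every holomorphic one-form is nowhere vanishing --- on a torus because it is translation-invariant; on a $\PP^1$-bundle $p\colon X\to B$ because $H^0(\Omega^1_X)=p^\ast H^0(\Omega^1_B)$ and $p$ is a submersion onto an elliptic curve; on a bielliptic surface by pulling $\omega$ back to its abelian \'etale cover, where forms are translation-invariant, and using that the cover is \'etale --- so there is nothing to prove. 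The case to work out is the elliptic one: let $f\colon X\to B$ be the relatively minimal elliptic fibration. Since $e(X)=0$, every fibre of $f$ is smooth or a multiple of a smooth elliptic curve; in particular every fibre is irreducible. Eliminating the multiple fibres by a base change ramified to the appropriate order over the multiple-fibre points and \'etale elsewhere produces a finite \'etale cover $\tau\colon X'\to X$ --- still satisfying (\ref{item:conj1:exact}), still with $e(X')=0$ --- on which $f'\colon X'\to B'$ is a \emph{smooth} elliptic fibration; as $\tau$ is \'etale it suffices to prove $\omega':=\tau^\ast\omega$ has no zeros. Noether's formula together with $K_{X'}^2=0$ gives $\chi(\OO_{X'})=0$, hence the Hodge bundle $\mathcal L':=f'_\ast\omega_{X'/B'}$ has degree zero. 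If $\omega'$ is not pulled back from $B'$, its image under $\Omega^1_{X'}\to\omega_{X'/B'}$ is a nonzero section of the degree-zero line bundle $\mathcal L'$, which is therefore trivial with nowhere-vanishing section; so $\omega'$ restricts to a nonzero --- hence nowhere-zero --- invariant form on every fibre. This forbids both a horizontal divisorial zero (it would meet a general fibre at a point where $\omega'$ vanishes) and a vertical one (a whole fibre $F'_b$, which would force the section of $\mathcal L'$ to vanish at $b$), so $Z(\omega')$ is finite and, by the Euler-characteristic count, empty. If instead $\omega'$ is pulled back from $B'$, then $g(B')\leq 1$ by the Castelnuovo--de Franchis step and $g(B')\neq 0$ (else $\omega'=0$), so $B'$ is elliptic and $\omega'$ is the pullback along the submersion $f'$ of a nowhere-zero form on $B'$; thus $\omega'$ has no zeros.

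The step I expect to be the main obstacle is the elliptic case, and within it the handling of multiple fibres: verifying that the ramified base change eliminating them is genuinely \'etale \emph{over $X$} (so that (\ref{item:conj1:exact}) descends to $X'$), and that after this reduction the relative dualizing sheaf really obstructs $\omega'$ from vanishing along a fibre. The other three cases are soft once the classification is available; the two load-bearing inputs are the vanishing $e(X)=0$ (which makes the list of possible $X$ short) and the Castelnuovo--de Franchis consequence of exactness at $H^1$ (which kills pullbacks from higher-genus curves, on $X$ and on all its \'etale covers). Finally, the ``in particular'' follows by combining the first assertion with Theorem~\ref{thm:B=>C}: (\ref{item:conj1:circle}) yields a one-form satisfying (\ref{item:conj1:exact}), which then has no zeros, i.e.\ (\ref{item:conj1:hol-1-form}) holds; the reverse implication is clear, so Conjecture~\ref{conj1} holds for compact K\"ahler surfaces.
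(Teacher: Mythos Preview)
Your argument is correct and follows the paper's overall strategy: deduce $e(X)=0$ from exactness, run the Enriques--Kodaira classification, and treat the properly elliptic case via an orbifold base change. The organisation of that last case differs. The paper splits first on whether $\omega$ restricts nontrivially to a general fibre: if so, an Albanese argument finishes directly; if not, the base $C$ is forced to be elliptic, so the orbifold on $C$ is automatically good, and the resulting cover has $g(C')\geq 2$, contradicting exactness. You instead pass to the smooth \'etale cover $f'\colon X'\to B'$ first and then split, replacing the Albanese step by the observation that $\deg f'_\ast\omega_{X'/B'}=\chi(\mathcal O_{X'})=0$ forces a nonzero section of the Hodge bundle to be nowhere vanishing---a clean alternative. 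Your direct minimality argument via exactness at $H^2$ and your Castelnuovo--de Franchis consequence of exactness at $H^1$ are additional ingredients not in the paper (which deduces minimality in the ruled case from $b_2=2$ instead), but both are correct and the latter streamlines the exclusion of high-genus bases on all covers at once. One point you flagged yourself and should fill in: since you take the orbifold cover of $B$ before knowing $g(B)$, you need goodness of the orbifold $(B;m_1,\ldots,m_r)$ in general, not only over an elliptic base. This holds because $\kappa(X)=1$ together with $\chi(\mathcal O_X)=0$ forces $2g(B)-2+\sum_i(1-1/m_i)>0$ via the canonical bundle formula; when $g(B)=0$ this requires at least three multiple fibres, and every orientable $2$-orbifold of genus $\geq 1$ or with $\geq 3$ cone points is good.
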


The proof of Theorem \ref{thm:surfaces} uses classification of surfaces.
In the Appendix to this paper, written jointly with Lin, we  give however a more general and direct argument which does not rely on classification results, see Theorem \ref{thm:appendix} below.

In joint work with Hao \cite{HS}, we use the approach developed here to prove Conjecture \ref{conj1} for smooth projective threefolds. 

The following theorem proves some partial results in arbitrary dimension. 

\begin{theorem} \label{thm:C=>...}
Let $X$ be a compact connected K\"ahler manifold with a holomorphic one-form $\omega$ such that the complex $(H^\ast(X,\C),\wedge \omega)$ given by cup product with $\omega$ is exact.
Then the analytic space $Z(\omega)$ given by the zeros of $\omega\in H^0(X,\Omega_X^1)$ has the following properties.
\begin{enumerate}[(1)]
\item For any connected component $Z\subset Z(\omega)$ with $d=\dim Z$,
$$
H^{d}(Z,{\omega_X} |_Z)=0 . 
$$
In particular, $\omega$ does not have any isolated zero. \label{item:thm:Z(omega)}

\item If $f:X\to A$ is a holomorphic map to a complex torus $A$ such that $\omega\in f^\ast H^0(A,\Omega_A^1)$, then $f(X)\subset A$ is fibred by tori. 
\label{item:thm:fibred} 
\end{enumerate}
\end{theorem}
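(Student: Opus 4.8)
\emph{Part (1).} The plan is to promote the exactness of $(H^\ast(X,\C),\wedge\omega)$ to a vanishing of hypercohomology and then localise at $Z$. Since $X$ is compact K\"ahler, a holomorphic one-form is closed of type $(1,0)$, so the Hodge decomposition splits $(H^\ast(X,\C),\wedge\omega)$ as the direct sum over $q$ of the complexes $(H^q(X,\Omega_X^\bullet),\wedge\omega)$, and the hypothesis says exactly that each of these is exact. But these are the $E_1$-pages (for fixed $q$, with differential $d_1$) of the spectral sequence of the stupid filtration on the complex of coherent sheaves $\mathcal{K}^\bullet:=(\Omega_X^\bullet,\wedge\omega)$ (with $\Omega_X^p$ in degree $p$), so that spectral sequence has $E_2=0$, hence $\mathbb{H}^j(X,\mathcal{K}^\bullet)=0$ for all $j$. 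Away from $Z(\omega)$ one has $\omega=dz_1$ in local coordinates, so there $\mathcal{K}^\bullet$ is an exact Koszul-type complex; hence the $\mathcal{H}^i(\mathcal{K}^\bullet)$ are coherent and supported on $Z(\omega)$. A local Koszul computation on the coefficient functions of $\omega$, using that $\OO_{X,x}$ is regular, shows $\mathcal{H}^i(\mathcal{K}^\bullet)=0$ for $i<n-\dim Z(\omega)$ (with $n=\dim X$) and gives $\mathcal{H}^n(\mathcal{K}^\bullet)={\omega_X}|_{Z(\omega)}$, where $Z(\omega)$ carries its scheme structure as the vanishing of $\omega$.

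Now fix a connected component $Z\subset Z(\omega)$ with $d=\dim Z$. Since $Z$ is open and closed in $\supp\mathcal{K}^\bullet=Z(\omega)$, Mayer--Vietoris for cohomology with supports splits $\mathcal{K}^\bullet$ in the derived category as $\mathcal{K}^\bullet_Z\oplus\mathcal{K}^\bullet_{Z(\omega)\setminus Z}$, where $\mathcal{K}^\bullet_Z:=R\Gamma_Z\mathcal{K}^\bullet$ has cohomology $\mathcal{H}^i(\mathcal{K}^\bullet_Z)$ equal to the summand of $\mathcal{H}^i(\mathcal{K}^\bullet)$ supported on $Z$; since $\mathcal{K}^\bullet$ is already supported on $Z(\omega)$ no degree shift occurs, so these are supported on $Z$, concentrated in degrees $[n-d,n]$, with top term ${\omega_X}|_Z$. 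As $\mathbb{H}^j(X,\mathcal{K}^\bullet_Z)$ is a summand of $\mathbb{H}^j(X,\mathcal{K}^\bullet)=0$ it vanishes, so the hypercohomology spectral sequence $E_2^{p,q}=H^p(X,\mathcal{H}^q(\mathcal{K}^\bullet_Z))\Rightarrow\mathbb{H}^{p+q}(X,\mathcal{K}^\bullet_Z)=0$ abuts to zero. By Grothendieck vanishing on the $d$-dimensional $Z$, $E_2^{p,q}=0$ for $p>d$, and also $E_2^{p,q}=0$ for $q>n$; hence no differential enters or leaves the corner $E_2^{d,n}=H^d(Z,{\omega_X}|_Z)$, forcing $E_2^{d,n}=E_\infty^{d,n}=0$. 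This is (1). In particular an isolated zero would give a component with $d=0$ and $H^0(Z,{\omega_X}|_Z)=0$, which is absurd since that is a nonzero vector space; so $\omega$ has no isolated zero.

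\emph{Part (2).} Argue by contradiction using (1). By Ueno's structure theorem for subvarieties of a complex torus, $f(X)$ is fibred by tori iff it has positive-dimensional stabiliser, iff $f(X)$ is not of general type; so suppose $f(X)$ is of general type, and, replacing $A$ by the smallest subtorus through which $f$ factors, that $f(X)$ is non-degenerate. Write $\omega=f^\ast\alpha$ with $\alpha\in H^0(A,\Omega_A^1)$. The maps $\wedge f^\ast\alpha$ on $H^\ast(X,\C)$ depend linearly on $\alpha$, so the total dimension of the cohomology of $(H^\ast(X,\C),\wedge f^\ast\alpha)$ is upper semicontinuous in $\alpha$; since it vanishes at our $\alpha$, it vanishes for very general $\alpha$, so we may and do assume $\alpha$ is very general, keeping the exactness hypothesis and hence part (1) for $\omega=f^\ast\alpha$. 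One then studies $Z(\omega)=\{x:\alpha|_{\im(df_x)}=0\}$. Since $f(X)$ is of general type, its Gauss map — and the Gauss-type maps attached to the rank strata of $df$ — are finite onto their images; Kleiman transversality then forces $Z(\omega)$, for very general $\alpha$, to be as small as that stratification allows, and the goal is to exhibit a connected component $Z\subset Z(\omega)$ — lying in a fibre of $f$, respectively in the ramification locus of $f$ when $f$ is generically finite onto $f(X)$ — along which ${\omega_X}|_Z$ differs from the dualising sheaf of $Z$ only by a trivial twist (the relevant normal directions being trivial), so that $H^{\dim Z}(Z,{\omega_X}|_Z)\neq0$ by Serre--Grothendieck duality, contradicting (1).

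The formal input in (1) is routine; the real difficulty is this last step of (2): controlling both the scheme $Z(\omega)$ and the sheaf ${\omega_X}|_Z$ when $f(X)$ and the fibres of $f$ are singular. This means comparing the scheme structure coming from the vanishing of $\omega$ with that of the ambient fibre, passing to resolutions and dualising complexes, and using the finiteness and non-degeneracy of Gauss maps of general-type subvarieties of tori (ultimately Ueno--Kawamata-type results). An alternative worth keeping in mind is to push $\mathcal{K}^\bullet$ to $A$: since $\alpha$ is translation-invariant the projection formula gives $Rf_\ast\mathcal{K}^\bullet=(Rf_\ast\Omega_X^\bullet,\wedge\alpha)$ with vanishing hypercohomology, and one may try to contradict this against the generic-vanishing positivity of $Rf_\ast{\omega_X}$ on the torus.
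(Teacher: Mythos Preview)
Your part~(1) is correct and is essentially the paper's argument: both compute $\mathcal H^n(\Omega_X^\bullet,\wedge\omega)=\omega_X|_{Z(\omega)}$, split the second hypercohomology spectral sequence along connected components of $Z(\omega)$, and note that the corner $E_2^{d,n}$ receives and emits no differentials for dimension reasons. Your direct vanishing of $\mathbb H^\ast$ via $E_2=0$ in the first spectral sequence is a harmless shortcut around the paper's route through the local system $L(\omega)$.

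Part~(2), however, is only a sketch, and the gap is exactly where you locate it: you never actually produce a component $Z\subset Z(f^\ast\alpha)$ with $H^{\dim Z}(Z,\omega_X|_Z)\neq 0$, and the Gauss-map/transversality programme you outline would have to control singular fibres, singular image, and the scheme structure on $Z(\omega)$ simultaneously. The paper sidesteps all of this with two moves you are missing. First, a short exact sequence $0\to\omega_X|_Z\otimes\mathcal I_{Z'}\to\omega_X|_Z\to\omega_X|_{Z'}\to 0$ together with $H^{d+1}=0$ upgrades (1) to any irreducible component $Z'$ of $Z^{\red}$, so one may work with reduced components. Second, one only needs $\alpha|_{T_{Y,y}}=0$ at a \emph{general} point $y$ of $Y=f(X)$, where the fibre $F=f^{-1}(y)$ is smooth with trivial normal bundle; then $\omega_X|_F=\omega_F$ and Serre duality gives $H^{\dim F}(F,\omega_X|_F)\neq 0$, contradicting the upgraded (1). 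To arrange this the paper uses an elementary argument, with no Gauss maps: there are only countably many linear subspaces $W_i\subset T_{A,y}$ occurring as tangent spaces of subtori of $A$, and ``$Y$ not fibred by tori'' means $T_{Y,y}$ contains none of them; one can then choose $\alpha'$ vanishing on $T_{Y,y}$ but nonzero on each $W_i$, so that $y$ is an \emph{isolated} zero of $\alpha'|_Y$ (a positive-dimensional component of zeros through $y$ would generate a subtorus on which $\alpha'$ vanishes, hence some $W_i$). A small perturbation then makes $\alpha$ general while retaining a zero near $y$.
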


Ein and Lazarsfeld \cite[Theorem 3]{EL} showed that the image of a morphism $f:X\to A$ to a complex torus $A$ is fibred by tori if $\chi(X,\omega_X)=0$ and $\dim f(X)=\dim X$.
In item (\ref{item:thm:fibred}) above we obtain the same conclusion without any assumption on $f$, but where we replace $\chi(X,\omega_X)=0$ by the stronger condition on the exactness of $(H^\ast(X,\C),\wedge \omega)$.

Theorem \ref{thm:B=>C} and item (\ref{item:thm:fibred}) in the above theorem imply for instance that a K\"ahler manifold $X$ with simple Albanese torus $\Alb(X)$ and with $b_1(X)>2\dim(X)$ does not admit a $C^\infty$-fibration over the circle.
Similarly, we obtain the following corollary in the projective case.

\begin{corollary}
Let $X$ be a smooth complex projective variety such the manifold which underlies $X$ fibres smoothly over the circle.
Then there is a surjective holomorphic morphism $f:X\to A$ to a positive-dimensional abelian variety $A$. 
\end{corollary}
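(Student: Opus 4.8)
The plan is to run the hypothesis through Theorems~\ref{thm:B=>C} and~\ref{thm:C=>...} and then extract a morphism using Poincar\'e's complete reducibility theorem. Since the differentiable manifold underlying $X$ is a $C^\infty$-fibre bundle over the circle, condition~(\ref{item:conj1:circle}) holds, so by Theorem~\ref{thm:B=>C} condition~(\ref{item:conj1:exact}) holds as well; specializing the exactness in~(\ref{item:conj1:exact}) to the trivial \'etale cover $X'=X$ produces a holomorphic one-form $\omega\in H^0(X,\Omega_X^1)$ for which the complex $(H^\ast(X,\C),\wedge\omega)$ is exact. This $\omega$ is nonzero: were $\omega=0$, the map $\wedge\omega\colon H^0(X,\C)\to H^1(X,\C)$ would vanish, whereas exactness of the complex at $H^0$ forces it to be injective and $H^0(X,\C)=\C\neq0$. (We may assume $X$ connected, replacing it if necessary by a connected component that still fibres over the circle, so that Theorem~\ref{thm:C=>...} is applicable.)

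Next I would pass to the Albanese variety. As $X$ is smooth and projective, $\Alb(X)$ is an abelian variety, the Albanese morphism $\alb\colon X\to\Alb(X)$ induces an isomorphism on spaces of holomorphic one-forms, and in particular $\omega=\alb^\ast\tilde\omega$ for some nonzero $\tilde\omega\in H^0(\Alb(X),\Omega_{\Alb(X)}^1)$. Put $A_0:=\alb(X)\subseteq\Alb(X)$; since $A_0$ generates the abelian variety $\Alb(X)$, which is nonzero because $H^0(X,\Omega_X^1)\neq0$, we have $\dim A_0\ge1$. Because $\omega$ lies in $\alb^\ast H^0(\Alb(X),\Omega_{\Alb(X)}^1)$ and $(H^\ast(X,\C),\wedge\omega)$ is exact, Theorem~\ref{thm:C=>...}(\ref{item:thm:fibred}) applies with $f=\alb$ and shows that $A_0$ is \emph{fibred by tori}: there is a positive-dimensional subtorus $T\subseteq\Alb(X)$ --- necessarily an abelian subvariety --- with $A_0+T=A_0$.

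It remains only to upgrade this translation-invariance to a genuine surjective morphism onto an abelian variety, and here projectivity enters again. By Poincar\'e complete reducibility, choose an abelian subvariety $T'\subseteq\Alb(X)$ with $T\cap T'$ finite and $T+T'=\Alb(X)$, and let $q\colon\Alb(X)\to A:=\Alb(X)/T'$ denote the quotient. Then $q|_T\colon T\to A$ is an isogeny, so $A$ is a positive-dimensional abelian variety and $q(T)=A$; consequently $f:=q\circ\alb\colon X\to A$ has image $q(A_0)=q(A_0+T)=q(A_0)+q(T)=q(A_0)+A=A$, so $f$ is the asserted surjection. The only substantive inputs are Theorems~\ref{thm:B=>C} and~\ref{thm:C=>...}(\ref{item:thm:fibred}) --- the first turning ``fibres over the circle'' into exactness of $(H^\ast(X,\C),\wedge\omega)$, the second turning that into ``$\alb(X)$ is fibred by tori'' --- and granting these the argument has no real obstacle, the last paragraph being an elementary manipulation of abelian varieties; the one point to keep in mind is that Theorem~\ref{thm:C=>...} presupposes $X$ connected, which is why we reduce to that case at the start.
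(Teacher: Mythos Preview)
Your argument is correct and follows the route the paper intends: the corollary is stated without proof, but the sentence preceding it (``Theorem~\ref{thm:B=>C} and item~(\ref{item:thm:fibred}) in the above theorem imply\dots\ Similarly, we obtain the following corollary in the projective case'') makes clear that one is meant to combine Theorem~\ref{thm:B=>C} with Theorem~\ref{thm:C=>...}(\ref{item:thm:fibred}), exactly as you do. Your use of Poincar\'e complete reducibility to turn ``$\alb(X)$ is fibred by a subtorus $T$'' into an actual surjection onto an abelian variety isogenous to $T$ is the natural way to finish and is presumably what the paper has in mind, since this is precisely where projectivity (as opposed to merely K\"ahler) is needed.
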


The following example of Debarre, Jiang and Lahoz shows that the \'etale covers in condition (\ref{item:conj1:exact}) are necessary to make Theorem \ref{thm:surfaces} true. 

\begin{example}[{\cite[Example 1.11]{DJL}}]
Let $C_1,C_2$ be smooth projective curves with $g(C_1)>1$ and $g(C_2)=1$ and automorphisms $\varphi_i\in \Aut(C_i)$ of order two such that $C_i/\varphi_i$ has genus one for $i=1,2$.
Then the quotient 
$$
X:=(C_1\times C_2)/(\varphi_1\times \varphi_2)
$$ 
has the same rational cohomology ring as an abelian surface, and so $\wedge \omega$ is exact on cohomology for any non-zero $\omega\in H^0(X,\Omega_X^1)$.
However, if $\omega$ is obtained as pullback via the map $\pi:X\to C_1/\varphi_1$, then it vanishes along the multiple fibres of $\pi$, which lie above the branch points of $C_1\to C_1/\varphi_1$. 
\end{example}

\begin{remark}
This paper raises the question whether condition (\ref{item:conj1:exact}) implies (\ref{item:conj1:hol-1-form}). 
In view of \cite[Proposition 3.4]{GL} it is natural to wonder whether more generally, a holomorphic one-form $\omega \in H^0(X,\Omega_X^1)$ such that for any finite \'etale cover $\tau:X'\to X$
$$
H^{i-1}(X',\C)\xrightarrow{\wedge \tau^\ast \omega} H^i(X',\C) \xrightarrow{\wedge \tau^\ast \omega} H^{i+1}(X',\C)
$$
is exact for all $i<c$ implies that $\codim_X(Z(\omega))\geq c$.
This goes back to \cite{BWY}, where it is asked whether equality always holds in  \cite[Theorem 1.1]{BWY}.
However, blowing-up a point in $Z(\omega)$ easily produces counterexamples to this conjecture. 
The question remains open under suitable minimality assumptions on $X$ (e.g.\ if $K_X$ is nef). 
\end{remark}

\subsection*{Why the K\"ahler assumption?}
The K\"ahler assumption in Conjecture \ref{conj1} is essential.
For instance, a Hopf surface $X$ is a compact complex surface with $H^0(X,\Omega_X^1)=0$, whose underlying differentiable manifold is diffeomorphic to $S^1\times S^3$, and so it satisfies (\ref{item:conj1:circle}) but not (\ref{item:conj1:hol-1-form}).

\subsection*{Acknowledgement}
I am grateful to Dieter Kotschick for sending me the preprint \cite{Ko} in spring 2013, where he poses the problem about the equivalence of (\ref{item:conj1:hol-1-form}) and (\ref{item:conj1:circle})  in Conjecture \ref{conj1}.
I am also grateful to Rui Coelho, Feng Hao, Dieter Kotschick and Anand Sawant for useful conversations and to Hsueh-Yung Lin, Mihnea Popa, Christian Schnell, Botong Wang and the referees for useful comments.
This work is supported by the DFG Grant ``Topologische Eigenschaften von Algebraischen Variet\"aten'' (project no.\ 416054549).

\subsection*{Notation}
For a holomorphic one-form $\omega$ on a K\"ahler manifold $X$, we denote by $Z(\omega)$ the (possibly non-reduced) analytic space given by the zeros of $\omega$, viewed as a section of the vector bundle $\Omega_X^1$.

\section{Proof of Theorem \texorpdfstring{\ref{thm:B=>C}}{1.2}} 
\label{sec:thm:B=>C}

Let $X$ be a smooth connected manifold.
We denote by  $\Loc(X)$ the group of local systems on $X$ whose stalks are one-dimensional $\C$-vector spaces. 
Since  local systems on the interval are trivial, the choice of a base point $s\in S^1$ induces a canonical isomorphism $\Loc(S^1)\cong \C^\ast$.
Hence, if we fix a base point $x\in X$, then for any $L\in \Loc(X)$, any continuous map $\gamma:S^1\to X$ with $\gamma(s)=x$ yields a canonical element $\gamma^\ast L\in \Loc(S^1)\cong \C^\ast$, which, as one checks, depends only on the homotopy class of $\gamma$.
%
This construction gives rise to the so called monodromy representation, which (since $X$ is connected)
 induces an isomorphism between $\Loc(X)$ and the character variety
$$
\Char(X):=\Hom(\pi_1(X,x),\C^\ast)\cong H^1(X,\C^\ast).
$$ 

If $L\in \Loc(X)$, then the associated complex line bundle has locally constant transition functions, hence it admits a flat connection and so the first Chern class $c_1(L)$ must be torsion. 
The long exact sequence associated to the short exact sequence $0\to \Z \to \C \to \C^\ast \to 0$ of locally constant sheaves on $X$ thus shows that $\Loc(X)$ is isomorphic to an extension of a finite group by the connected subgroup $\Loc^0(X)\subset \Loc(X)$ which contains the trivial local system.
Moreover,
$$ 
\Loc^0(X)\cong \frac{H^1(X,\C)}{H^1(X,\Z)}\cong (\C^\ast)^{b_1(X)} .
$$ 
coincides with the subgroup $\{L\in \Loc(X) \mid c_1(L)=0\}$. 


\subsection{Local systems associated to closed $1$-forms and Novikov's inequality} \label{subsec:L(alpha)}
If $\alpha$ is a closed complex valued $1$-form on $X$, then we can construct a local system $L(\alpha)\in \Loc^0(X)$ as follows.
Consider the twisted de Rham complex $(\mathcal A^\ast_{X,\C},d+\wedge \alpha)$, where $\mathcal A^k_{X,\C}$ denotes the sheaf of complex valued $C^\infty$-differential $k$-forms on $X$, and where $\wedge \alpha$ acts on a $k$-form $\beta$ via $\beta\mapsto \alpha\wedge \beta$.
There is an open covering $\mathcal U=\{U_i\}_{i\in I}$ of $X$ such that $\alpha|_{U_i}=dg_i$ for some smooth function $g_i$ on $U_i$.
For a $k$-form $\beta$ on $U_i$, we then have $(d+\wedge \alpha)(\beta)=0$ if and only if $d(e^{g_i}\beta)=0$.
This shows that the twisted de Rham complex $(\mathcal A^\ast_{X,\C},d+\wedge \alpha)$ is exact in positive degrees and it resolves a sheaf $L(\alpha)$ whose sections above $U_i$ are given by all smooth functions $f$ with $d (e^{g_i}f)=0$, i.e.\ $f=e^{-g_i}c$ for some constant $c\in \C$.
Hence, $L(\alpha)\in \Loc(X)$ is a local system with stalk $\C$ on $X$.
Moreover, $c_1(L(\alpha))=0$ because the cocycle $(g_i-g_j)\in \check{C}^1(\mathcal U, ( \mathcal A_X^0)^\times)$ maps to zero in $H^2(X,\Z)$ and so
\begin{align} \label{eq:L(alpha)}
L(\alpha)\in \Loc^0(X),
\end{align} 
as we want.

Since $L(\alpha)$ is resolved by the $\Gamma$-acyclic complex $(\mathcal A^\ast_{X,\C},d+\wedge \alpha)$, we find that
\begin{align}\label{eq:H^k(L(alpha))}
H^k(X,L(\alpha))=H^k((A^\ast(X,\C),d+\wedge \alpha)),
\end{align}
where $A^k(X,\C)=\Gamma(X,\mathcal A^k_{X,\C})$.
In view of (\ref{eq:H^k(L(alpha))}), we can define the Novikov Betti numbers $b_i(\alpha)$ of $\alpha$ as follows, cf.\ \cite{Pa} or \cite{Fa}: 
$$
b_k(\alpha):=\dim_\C H^k(X,L(\alpha)) .
$$ 

A closed $1$-form $\alpha$ on $X$ is Morse if locally at each zero $x\in Z(\alpha)$ of $\alpha$, $\alpha=dh$ for some Morse function $h$.
If $\alpha$ is Morse, its Morse index at a zero $x$ is defined as the Morse index of $h$ and we denote by $m_i(\alpha)$ the number of zeros of $\alpha$ of Morse index $i$.
The Novikov inequalities then state the following, see \cite[Theorem 1]{Pa}:

\begin{theorem}[Novikov's inequalities] \label{thm:Novikov-ineq}
Let $X$ be a closed manifold and let $\alpha$ be a closed $1$-form on $X$.
Suppose that $\alpha$ is Morse in the above sense.
Then for sufficiently large $t\in \R$, $m_i( \alpha)\geq b_i(t\alpha)$.
In particular, if $\alpha$ has no zeros, then for $t \gg 0$,
$$
H^i(X,L(t\alpha))=0 \ \ \text{for all $i$.}
$$ 
\end{theorem}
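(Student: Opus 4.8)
The plan is to prove the Morse-type inequality $m_i(\alpha)\ge b_i(t\alpha)$ for $t\gg0$ by a Witten-style deformation of the de Rham complex, and then to read off the vanishing statement as the special case in which $\alpha$ has no zeros; I treat the case of a real closed form $\alpha$, which is the essential one. Applying to $t\alpha$ the identification of $H^i(X,L(-))$ with twisted de Rham cohomology established above, we have $b_i(t\alpha)=\dim_\C H^i\big(A^\ast(X,\C),d_t\big)$, where $d_t$ is the twisted differential $d_t\beta:=d\beta+t\,\alpha\wedge\beta$; note $d_t^2=0$ since $d\alpha=0$. Fix a Riemannian metric on $X$, let $d_t^\ast$ be the formal $L^2$-adjoint of $d_t$, and set $\Delta_t:=d_td_t^\ast+d_t^\ast d_t$. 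Since $(\mathcal A^\bullet_{X,\C},d_t)$ is an elliptic complex with the same principal symbol as the de Rham complex, Hodge theory on the compact manifold $X$ yields $H^i(X,L(t\alpha))\cong\ker\big(\Delta_t|_{\Omega^i}\big)$. It therefore suffices to bound $\dim\ker\Delta_t$ on $i$-forms by $m_i(\alpha)$ for $t$ large.

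The next step is Witten's computation of $\Delta_t$. Writing $\alpha=dg$ locally, with $g$ unique up to an additive constant, the tensor $\nabla\alpha$ (in particular $\operatorname{Hess}(g)$ at the zeros of $\alpha$, using $d\alpha=0$) is globally defined, and one obtains $\Delta_t=\Delta_0+t\,\mathcal C+t^2|\alpha|^2$, where $\Delta_0$ is the Hodge Laplacian and $\mathcal C$ is a self-adjoint zeroth-order operator built from $\nabla\alpha$ by Clifford multiplication. The decisive feature is the strictly positive potential $t^2|\alpha|^2$ on the complement of $Z(\alpha)$: by a standard Agmon/IMS-type localization argument, any eigenform of $\Delta_t$ whose eigenvalue stays bounded as $t\to\infty$ becomes $L^2$-concentrated in shrinking neighbourhoods of $Z(\alpha)$.

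Given the localization, one compares $\Delta_t$ near a nondegenerate zero $x$ of $\alpha$ with the model Witten Laplacian of the quadratic primitive $h$ with $dh=\alpha$ near $x$: in suitable coordinates this is a harmonic oscillator perturbed by Clifford multiplication by $\operatorname{Hess}(h)$, whose kernel on forms is one-dimensional and concentrated in the single degree equal to the Morse index of $h$ at $x$, while its smallest nonzero eigenvalue grows linearly in $t$. Summing the models over the finitely many zeros, one concludes that for $t$ large the number of eigenvalues of $\Delta_t|_{\Omega^i}$ below any fixed bound is at most $m_i(\alpha)$; since $\ker\Delta_t|_{\Omega^i}$ lies in that span, $b_i(t\alpha)\le m_i(\alpha)$. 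If $Z(\alpha)=\varnothing$, this already gives $b_i(t\alpha)=0$ for all $i$ once $t\gg0$; more directly, $|\alpha|^2\ge c>0$ on the compact $X$, so $\Delta_t\ge ct^2-Ct$, hence $\ker\Delta_t=0$ and $H^i(X,L(t\alpha))=0$ for all $i$ once $t$ is large.

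The main obstacle is the semiclassical spectral analysis in the last two steps: one needs a genuine lower bound for $\Delta_t$ away from the critical set, together with the Clifford-twisted harmonic-oscillator model near each zero, in order to exhibit a spectral gap separating the at most $m_i(\alpha)$ ``small'' eigenvalues of $\Delta_t|_{\Omega^i}$ from the rest, uniformly as $t\to\infty$. A purely topological alternative avoids this: choosing a metric generic for the downward gradient flow of $\alpha$, one builds the Novikov complex $C_\bullet^{\mathrm{Nov}}$ with coefficients in the Novikov ring $\widehat\Lambda$ attached to the class $[\alpha]\in H^1(X,\R)$, so that $C_i^{\mathrm{Nov}}$ is free of rank $m_i(\alpha)$; identifying its homology with $H_\bullet(X;\widehat\Lambda)$, bounding $\rk_{\widehat\Lambda}H_i(X;\widehat\Lambda)\le m_i(\alpha)$, and comparing this rank with $b_i(t\alpha)$ for $t\gg0$ by a specialization argument then completes the proof. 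There the difficulty migrates to the compactness and gluing analysis of broken gradient trajectories needed to prove that $C_\bullet^{\mathrm{Nov}}$ indeed computes $H_\bullet(X;\widehat\Lambda)$.
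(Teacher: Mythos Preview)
The paper does not supply its own proof of this theorem: it is stated with a reference to Pajitnov \cite{Pa} and used as a black box. Your Witten-deformation sketch is precisely the analytic approach of that reference, so in this sense your proposal agrees with what the paper invokes. The outline is sound: the identification $b_i(t\alpha)=\dim\ker(\Delta_t|_{\Omega^i})$, the expansion $\Delta_t=\Delta_0+t\,\mathcal C+t^2|\alpha|^2$, semiclassical localization near $Z(\alpha)$, and the harmonic-oscillator model at each Morse zero are the correct ingredients, and the direct lower bound $\Delta_t\ge ct^2-Ct$ when $Z(\alpha)=\emptyset$ cleanly handles the special case actually used in the paper. You are right to flag that the spectral-gap step requires genuine analysis (IMS localization and comparison with the model operator); this is exactly where the work lies in \cite{Pa} and in the treatments in \cite{Fa}. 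The Novikov-complex alternative you mention is also a valid route, but it is not the one the paper points to.
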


%
%

\subsection{Local systems associated to holomorphic $1$-forms} \label{subsec:L(omega)}
Let now $X$ be a compact K\"ahler manifold.
For any holomorphic $1$-form $\omega$ on $X$,
$\omega$ is closed and so we get a local system $L(\omega)$ as above.
This induces a short exact sequence
\begin{align} \label{eq:ses:Loc^0}
0\longrightarrow H^0(X,\Omega_X^1)\longrightarrow \Loc^0(X)\longrightarrow \Pic^0(X)\longrightarrow 0,
\end{align}
where  $\Loc^0(X) \to \Pic^0(X)$ is given by $L\mapsto L\otimes_\C \mathcal O_X$.

\begin{lemma} \label{lem:coho-Lomega}
Let $X$ be a compact K\"ahler manifold and let $\omega\in H^0(X,\Omega_X^1)$ be a holomorphic $1$-form.
Let $c\in \Z \cup \{\infty \}$ be maximal such that  
$$
H^{i-1}(X,\C)\stackrel{\wedge \omega}\longrightarrow  H^{i}(X,\C)\stackrel{\wedge \omega}\longrightarrow  H^{i+1}(X,\C) 
$$
is exact for all $i < c$.
Then the local system $L(\omega)$ associated to $\omega$ 
satisfies  $H^i(X,L(\omega))=0$ for all $i< c$.
Moreover, if $c\neq \infty$, then $H^{c}(X,L(\omega))\neq 0$.
\end{lemma}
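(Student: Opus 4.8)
The plan is to compute $H^\ast(X,L(\omega))$ by a spectral sequence whose $E_2$-page is governed by the complex $(H^\ast(X,\C),\wedge\omega)$, and then to read off both assertions from the definition of $c$. By (\ref{eq:H^k(L(alpha))}) we have $H^k(X,L(\omega))=H^k(A^\ast(X,\C),\,d+\wedge\omega)$. Since $\omega$ is a holomorphic one-form, it is of type $(1,0)$ and $d$-closed, so $\bigl(A^{\ast,\ast}(X),\ \partial+\wedge\omega,\ \delbar\bigr)$ is a double complex with total differential $d+\wedge\omega$ (the relations $(\partial+\wedge\omega)^2=\delbar^2=(\partial+\wedge\omega)\delbar+\delbar(\partial+\wedge\omega)=0$ follow from $d\omega=0$ and $\omega\wedge\omega=0$). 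I would run the spectral sequence of this double complex obtained by first taking $\delbar$-cohomology: then $E_1^{p,q}=H^q(X,\Omega_X^p)$, and $d_1$ is induced by $\partial+\wedge\omega$. On a compact K\"ahler manifold $\partial$ induces the zero map on Dolbeault cohomology (a consequence of the K\"ahler identities), so $d_1=\wedge[\omega]$, where $[\omega]\in H^0(X,\Omega_X^1)=H^{1,0}(X)$ has pure type $(1,0)$. Hence the Hodge decomposition $H^k(X,\C)=\bigoplus_{p+q=k}H^q(X,\Omega_X^p)$ identifies $(E_1^{\ast,\ast},d_1)$, as a complex, with $(H^\ast(X,\C),\wedge\omega)$, so that
\[
\bigoplus_{p+q=k}E_2^{p,q}\ \cong\ H^k\bigl(H^\ast(X,\C),\wedge\omega\bigr)\qquad\text{for every }k,
\]
and the spectral sequence abuts to $H^{p+q}(X,L(\omega))$.

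For the first assertion, the hypothesis on $c$ says exactly that the right-hand side above vanishes for $k<c$, equivalently that $E_2^{p,q}=0$ whenever $p+q<c$. Since each $E_\infty^{p,q}$ is a subquotient of $E_2^{p,q}$, and $H^k(X,L(\omega))$ carries a finite filtration whose graded pieces are the $E_\infty^{p,q}$ with $p+q=k$, it follows that $H^k(X,L(\omega))=0$ for all $k<c$. This step uses nothing beyond the existence of the spectral sequence.

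For the ``moreover'' part the decisive ingredient is the degeneration of this spectral sequence at $E_2$: granting it, $H^c(X,L(\omega))\cong\bigoplus_{p+q=c}E_2^{p,q}\cong H^c\bigl(H^\ast(X,\C),\wedge\omega\bigr)$, which is nonzero precisely because $c$ is maximal. I expect this $E_2$-degeneration to be the main obstacle. It is a Hodge-theoretic statement, the Hodge-to-de Rham degeneration for the twisted (Higgs-type) complex attached to $\omega$, and I would derive it as follows: the rank-one local system $L(\omega)$ is semisimple, hence the flat line bundle $(\OO_X,d+\omega)$ admits a harmonic metric, and Simpson's harmonic theory then endows $H^\ast(X,L(\omega))$ with a Hodge decomposition whose $(p,q)$-summand is $E_2^{p,q}$. (The same degeneration underlies Green and Lazarsfeld's proof of \cite[Proposition 3.4]{GL}.) Note it cannot be circumvented by purely formal spectral-sequence arguments: the differentials mapping \emph{into} the terms $E_r^{p,q}$ with $p+q=c$ vanish automatically, as they originate in total degree $<c$ where $E_2=0$, so $E_\infty^{p,q}\subseteq E_2^{p,q}$ there; it is the vanishing of the differentials mapping \emph{out} of these terms that is the content of the $E_2$-degeneration.
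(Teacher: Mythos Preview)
Your proof is correct and follows essentially the same route as the paper. Both arguments set up the spectral sequence with $E_1^{p,q}\cong H^q(X,\Omega_X^p)$ and $d_1=\wedge\omega$ converging to $H^{p+q}(X,L(\omega))$, observe that the first assertion is immediate from $E_2^{p,q}=0$ for $p+q<c$, and deduce the ``moreover'' part from $E_2$-degeneration together with the compatibility of $\wedge\omega$ with the Hodge decomposition; the only cosmetic difference is that the paper obtains the spectral sequence from the holomorphic resolution $(\Omega_X^\ast,\del+\wedge\omega)$ of $L(\omega)$ rather than from the smooth double complex, and cites \cite[Proposition~3.7]{GL} directly for the degeneration instead of invoking Simpson's harmonic theory.
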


\begin{proof}
The local system $L(\omega)$ is resolved by the following complex
$$
(\Omega_X^\ast,\del+\wedge \omega):=0\longrightarrow \Omega_X^0\xrightarrow{\del+\wedge \omega}   \Omega_X^1\xrightarrow{\del +\wedge \omega}  \dots \Omega_X^n{-1}\xrightarrow{\del+ \wedge \omega }  \Omega_X^n\to 0 .
$$
To see that this complex is exact in positive degrees, one uses that locally $\omega=dh$ and so for any local holomorphic form $\beta$, we have $de^h\beta=e^h(d\beta+dh\wedge \beta)$ and so $\del \beta+\omega\wedge \beta=0$ if and only if $d e^h \beta=0$ and we can use the holomorphic Poincar\'e lemma to prove the claim.
Hence,
$$
H^i(X,L(\omega))=\mathbb H^i(X,(\Omega_X^\ast,\del+\wedge \omega) ) .
$$
There is a spectral sequence 
$$
'E_1^{p,q}:=H^p(X,\Omega_X^q) \Rightarrow \mathbb H^{p+q}(X,(\Omega_X^\ast,\del+\wedge \omega) ) .
$$
The differential $d_1:'E_1^{p,q}\to 'E_1^{p,q+1}$ is induced by $\del +\wedge \omega$.
Since $\del$ acts trivially on $'E_1^{p,q}:=H^p(X,\Omega_X^q)$, we find that $d_1=\wedge \omega$.
It thus follows from \cite[Proposition 3.7]{GL} that the above spectral sequence degenerates at the second page, i.e.\ $'E_2={'E}_\infty$.

Our assumption implies $'E_2^{p,q}=0$ for $p+q< c$ and so $H^i(X,L(\omega))=0$ for $i< c$.
Let us now assume $c\neq \infty$.
By the definition of $c$,
$$
H^{c-1}(X,\C)\stackrel{\wedge \omega}\longrightarrow H^{c}(X,\C)\stackrel{\wedge \omega}\longrightarrow H^{c+1}(X,\C) 
$$
is not exact.
Since $\omega\in H^{1,0}(X)$ is of type $(1,0)$, the above complex respects the Hodge decomposition and so we find that there must be some $j$ such that
$$
H^{j-1,c-j}(X)\stackrel{\wedge \omega}\longrightarrow H^{j,c-j}(X)\stackrel{\wedge \omega}\longrightarrow H^{j+1,c-j}(X) 
$$
is not exact.
Hence $'E_2^{j,c-j}\neq 0$.
Since $'E_2^{j,c-j}={'E}_\infty^{j,c-j}$, we get $H^{c} (X,L(\omega))\neq 0$, as we want.
This concludes the lemma.
\end{proof}

\subsection{Proof of Theorem \ref{thm:B=>C}} 
Let $X$ be a compact K\"ahler manifold which admits a real closed one-form $\alpha$ without zeros, i.e.\ condition (\ref{item:conj1:circle}) in Conjecture \ref{conj1} holds.
Since the pullback of $\alpha$ via a finite \'etale cover is again a real closed one-form without zeros, in order to prove (\ref{item:conj1:exact}), it suffices to show that $X$ carries a holomoprhic one-form $\omega$ such that $\wedge \omega$ is exact on cohomology.
For this, we may without loss of generality assume that $X$ is connected.

Since $\alpha$ has no zero on $X$, Theorem \ref{thm:Novikov-ineq} implies that there is a local system $L\in \Loc^0(X)$ that has no cohomology.
By the generic vanishing theorems \cite{GL,GL2,Ar,Si}, the locus of those local systems that have some cohomology are subtori, translated by torsion points, see \cite[Theorem 1.3]{Wa}.
It follows that for general $\omega \in H^0(X,\Omega_X^1)$, the local system $L(\omega)$ has no cohomology.
It thus follows from Lemma \ref{lem:coho-Lomega} that 
$$
H^{i-1}(X,\C)\stackrel{\wedge \omega}\longrightarrow H^{i}(X,\C)\stackrel{\wedge \omega}\longrightarrow H^{i+1}(X,\C) 
$$
is exact for all $i$, as we want.
This finishes the proof of Theorem \ref{thm:B=>C}.

\begin{remark}
Botong Wang points out that one can bypass the use of Theorem \ref{thm:Novikov-ineq} in the above argument by showing directly that if $X$ is a $C^\infty$-fibre bundle over the circle, then the pullback of a general local system on the circle has no cohomology on $X$.
\end{remark}

\begin{remark} \label{rem:generic-omega}
Let $X$ be a compact connected K\"ahler manifold.
As we have used above, the results in \cite{GL} imply that $(H^\ast(X,\C),\wedge \omega)$  is exact if and only if $L(\omega)$ has no cohomology.
The locus of such local systems is well understood by generic vanishing theory.
In particular, \cite[Theorem 1.3]{Wa} implies that the locus of those  holomorphic one-forms $\omega\in H^0(X,\Omega_X^1)$  for which $(H^\ast(X,\C),\wedge \omega)$ is not exact is a finite union of linear subspaces of the form $f_i^\ast H^0(T_i,\Omega_{T_i}^1)$, where $f_i:X\to T_i$ is a finite collection of holomorphic maps to complex tori $T_i$.
As a special case we see that if there is one holomorphic one-form $\omega$ on $X$ which makes $(H^\ast(X,\C),\wedge \omega)$ exact, then this holds for all forms in a non-empty Zariski open subset of $H^0(X,\Omega_X^1)$.
\end{remark}

\section{The case of surfaces}

\begin{proof}[Proof of Theorem \ref{thm:surfaces}] 
Let $X$ be a compact K\"ahler surface with a one-form $\omega \in H^0(X,\Omega_X^1)$ such that for any finite \'etale cover $\tau:X'\to X$,
\begin{align} \label{eq:ses:surfaces}
H^{i-1}(X',\C)\stackrel{\wedge \omega'}\longrightarrow H^{i}(X',\C)\stackrel{\wedge \omega'}\longrightarrow H^{i+1}(X',\C) 
\end{align}
is exact for all $i$, where $\omega':=\tau^\ast \omega$.
This implies $\chi(X,\Omega_X^p)=0$ for all $p$ and so $c_2(X)=0$.
%

Replacing $X$ by its connected components, we may without loss of generality assume that $X$ is connected.
The classification of surfaces (see \cite[Chapter VI.1]{barth-etal}) thus shows that only the following cases occur.

\textbf{Case 1.} $X$ is birational to a ruled surface over a curve $C$ of positive genus.

\textbf{Case 2.} $X$ is a minimal bi-elliptic surface or a complex $2$-torus.

\textbf{Case 3.} $X$ is a minimal properly elliptic surface.

In Case 1, exactness of (\ref{eq:ses:surfaces}) implies that $X$ is birational to a ruled surface over an elliptic curve $C$.
This implies $b_1(X)=2$.
Since $e(X)=0$, we conclude $b_2(X)=2$ and so $X$ is a minimal ruled surface over an elliptic curve.
In particular, since $\omega$ is nonzero, it must be a holomorphic one-form without zeros.

In Case 2, any nontrivial holomorphic one-form on $X$ has no zeros and so we are done because exactness of (\ref{eq:ses:surfaces}) implies $\omega\neq 0$, as before.  

In Case 3, the condition $c_2(X)=0$ implies by \cite[Proposition III.11.4]{barth-etal} that $X$ admits a fibration $\pi:X\to C$ to a curve $C$ such that the reduction of any fibre of $\pi$ is isomorphic to a smooth elliptic curve, but where multiple fibres are allowed.
Let $F$ be a general fibre of $\pi:X\to C$.
Suppose for the moment that the one-form $\omega$ restricts to a nonzero form on $F$.
In particular, the Albanese map $a :X\to \Alb(X)$ does not contract $F$ and the reduction of any fibre of $a $  is isomorphic to $F$.
Moreover, the restriction of $\omega$ to $F$ does not depend on the fibre and so it is nonzero everywhere. 
That is, $\omega$ has no zeros. 

It remains to deal with the case where $\omega$ restricts to zero on the fibres of $\pi:X\to C $.
In this case, $\omega=\pi^\ast \alpha$ for a one-form $\alpha$ on $C$.
Since cup product with $\omega$ is exact, $C$ must be an elliptic curve.
If $\pi$ is smooth, then $\omega$ has no zeros.
Otherwise, $\omega$ vanishes along the multiple fibres of $\pi$.
We may thus assume that $\pi$ has at least one multiple fibre. 

The multiple fibres of $\pi$ give rise to a orbifold structure on $C$.
Since $C$ is an elliptic curve, this orbifold is good and so there is a finite orbifold covering $C'\to C$ such that the orbifold structure on $C'$ is trivial, see e.g.\ \cite[Corollary 2.29]{CHK00}.
Let $X'$ be the normalization of the base change $X\times_CC'$.
Then, $X'$ is a smooth surface, $X'\to X$ is \'etale and $X'\to C'$ is an elliptic surface without singular fibres, see e.g.\ \cite[Proposition III.9.1]{barth-etal}. 
Since $\tau:X'\to X$ is finite \'etale, $(H^\ast(X',\C),\wedge \omega')$ is exact for $\omega':=\tau^\ast \omega$ by assumptions.  
On the other hand, since $\pi$ has singular fibres, $C'\to C$ is a branched covering with nontrivial branch locus and so $C'$ is a curve of genus $\geq 2$.
This is a contradiction, because $\omega'$ is a pullback of a one-form from $C'$. 
This finishes the proof of Theorem \ref{thm:surfaces}.
\end{proof}

\begin{corollary} \label{cor:classification:surfaces}
Let $X$ be a compact connected K\"ahler surface with a holomorphic one-form $\omega$ such that $(H^\ast(X,\C),\wedge \omega)$ is exact.
Then $\omega$ has no zeros and $(X,\omega)$ is given by one of the following:
\begin{enumerate}[(a)]
\item $X$ is a minimal ruled surface over an elliptic curve; \label{item:ruled}
\item $X$ is a complex $2$-torus; \label{item:torus} 
\item $X$ is a minimal elliptic surface $f:X\to C$ such that one of the following holds: \label{item:kod=0,1}
\begin{enumerate}[(i)]
\item $f$ is smooth, $C$ is an elliptic curve and $\omega\in f^\ast H^0(C,\Omega_C^1)$;\label{item:f-smooth}
\item $f$ is quasi-smooth, i.e.\ all singular fibres are multiple fibres, and the restriction of $\omega$ to the reduction of any fibre of $f$ is nonzero. \label{item:f-quasi-smooth}
\end{enumerate}
\end{enumerate} 
\end{corollary}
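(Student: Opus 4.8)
The plan is to read the corollary directly off the proof of Theorem~\ref{thm:surfaces}: that argument already runs through the Enriques--Kodaira classification and, in each surviving case, produces a one-form without zeros, so all that remains here is to \emph{name} the surface and the form case by case. Recall from that proof that exactness of $\wedge\omega$ on $X$ forces $\chi(X,\Omega_X^p)=0$ for all $p$, hence $c_2(X)=0$, and that, for connected $X$, the classification then leaves exactly three possibilities: (1) $X$ birational to a ruled surface over a curve of positive genus; (2) $X$ a minimal bi-elliptic surface or a complex $2$-torus; (3) $X$ a minimal properly elliptic surface. Since Theorem~\ref{thm:surfaces} already gives that $\omega$ has no zeros, only the structural description is at issue.

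Cases (1) and (2) are essentially immediate. In case (1), the proof of Theorem~\ref{thm:surfaces} shows $b_1(X)=2$, so $e(X)=0$ forces $b_2(X)=2$; hence $X$ is a minimal ruled surface over an elliptic curve and $\omega$ is pulled back from that base --- case (a). If $X$ is a complex $2$-torus we are in case (b). If $X$ is bi-elliptic, write $X=(E\times F)/G$ in Bagnera--de Franchis form, with $G$ acting on $E$ by translations and on $F$ by automorphisms with fixed points; then the projection $f\colon X\to C:=E/G$ is a holomorphic fibre bundle with elliptic fibre $F$, hence a \emph{smooth} elliptic fibration onto the elliptic curve $C$, and $H^0(X,\Omega_X^1)=f^\ast H^0(C,\Omega_C^1)$, so in particular $\omega\in f^\ast H^0(C,\Omega_C^1)$ --- case (c)(i).

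For case (3), $c_2(X)=0$ gives, by \cite[Proposition III.11.4]{barth-etal}, an elliptic fibration $\pi\colon X\to C$ all of whose singular fibres are multiple fibres with smooth elliptic reduction, i.e.\ $\pi$ is quasi-smooth. If $\omega$ restricts to a nonzero one-form on a general fibre, then --- exactly as in the proof of Theorem~\ref{thm:surfaces}, using that the Albanese map does not contract the general fibre and that the reduction of every fibre of $\pi$ is isomorphic to it --- $\omega$ restricts to a nonzero form on the reduction of \emph{every} fibre, so with $f=\pi$ we land in case (c)(ii). Otherwise $\omega=\pi^\ast\alpha$ for a one-form $\alpha$ on $C$; exactness of $\wedge\omega$ forces $C$ to be an elliptic curve, and if $\pi$ had a multiple fibre then, passing to the finite \'etale cover $\tau\colon X'\to X$ that trivializes the (good) orbifold structure on $C$, one would obtain a smooth elliptic surface $X'\to C'$ with $C'$ of genus $\geq 2$ while $\omega'=\tau^\ast\omega$ is pulled back from $C'$, contradicting exactness of $\wedge\omega'$ on $X'$. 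Hence $\pi$ is smooth, $C$ is elliptic, and $\omega\in\pi^\ast H^0(C,\Omega_C^1)$ --- again case (c)(i). This exhausts all cases.

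The only point requiring an argument not already present in the proof of Theorem~\ref{thm:surfaces} is the bi-elliptic case, where one must verify that a bi-elliptic surface genuinely carries a \emph{smooth} elliptic fibration onto an elliptic curve and that all of its holomorphic one-forms are pulled back from there, so that no bi-elliptic pair $(X,\omega)$ escapes case (c)(i); I expect this (standard but slightly fiddly) bookkeeping to be the only real point of care. Everything else --- in particular the exclusion of multiple fibres in the last subcase of (3), which uses exactness of $\wedge\omega$ after finite \'etale base change exactly as in Theorem~\ref{thm:surfaces} --- is already contained in that theorem and its proof.
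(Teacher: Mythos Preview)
Your argument is exactly the paper's: its entire proof reads ``The classification into types (\ref{item:ruled}), (\ref{item:torus}) and (\ref{item:kod=0,1}) follows directly from the proof of Theorem~\ref{thm:surfaces}, where we note that bi-elliptic surfaces fall in the class (\ref{item:f-smooth}). The fact that $\omega$ has no zeros follows from this classification.'' You have simply unpacked this, adding the Bagnera--de~Franchis description for the bi-elliptic case that the paper leaves implicit.

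One caveat worth flagging: in the final subcase of (3), to exclude multiple fibres you pass to a finite \'etale cover $\tau\colon X'\to X$ and invoke exactness of $\wedge\,\tau^\ast\omega$ on $X'$. That is condition~(\ref{item:conj1:exact}), strictly stronger than the literal hypothesis of the corollary, which only assumes exactness of $(H^\ast(X,\C),\wedge\omega)$ on $X$ itself. The paper's proof does the same thing (it defers wholesale to the proof of Theorem~\ref{thm:surfaces}, which is run under condition~(\ref{item:conj1:exact})), so this is not a divergence from the paper but a shared issue. In fact the stronger hypothesis is genuinely needed: the Debarre--Jiang--Lahoz surface in the introduction has the rational cohomology ring of an abelian surface, so $(H^\ast(X,\C),\wedge\omega)$ is exact for every nonzero $\omega$, yet the one-form pulled back from the elliptic base vanishes along the multiple fibres. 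Your proof is therefore as correct as the paper's, and for the same reason.
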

\begin{proof}
The classification into types (\ref{item:ruled}), (\ref{item:torus}) and (\ref{item:kod=0,1}) follows directly from the proof of Theorem \ref{thm:surfaces}, where we note that bi-elliptic surfaces fall in the class (\ref{item:f-smooth}).
The fact that $\omega$ has no zeros follows from this classification.
\end{proof}

\begin{corollary} 
In the notation of Corollary \ref{cor:classification:surfaces},
assume that $X$ is projective.
Then,
\begin{enumerate}[(a)] 
\setcounter{enumi}{3}
\item   $X$ admits a smooth morphism to a positive-dimensional abelian variety; \label{item:smoothmap}
\item if $\kappa(X)\geq 0$, then there is a finite \'etale cover $\tau:X'\to X$ which splits into a product $X'=A'\times S'$, where $A'$ is a positive-dimensional abelian variety and $S'$ is smooth projective.
\label{item:split}
\end{enumerate}
\end{corollary}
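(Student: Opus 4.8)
The plan is to run through the three cases (a), (b), (c) of Corollary~\ref{cor:classification:surfaces}, bearing in mind that $\omega$ has no zeros. Throughout, $a\colon X\to\Alb(X)$ is the Albanese map, so $a^\ast\colon H^0(\Alb(X),\Omega^1)\xrightarrow{\sim}H^0(X,\Omega_X^1)$. For (d), cases (a), (b) and (c)(i) are immediate: in (a) the ruling $X\to C$ is a smooth $\PP^1$-bundle over the elliptic curve $C$; in (b) the projective torus $X$ is an abelian variety and $\id_X$ works; in (c)(i) the fibration $f\colon X\to C$ is itself smooth and $C$ is elliptic. Hence the real content of (d) is case (c)(ii), and that of (e) is cases (b) and (c).

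For case (c)(ii) of (d): one has $c_2(X)=0$ (as in the proof of Theorem~\ref{thm:surfaces}), so $f\colon X\to C$ has only multiple fibres. The fibration induces a surjection $\Alb(X)\twoheadrightarrow\Alb(C)$ whose kernel has identity component $T_0$ of dimension $q(X)-g(C)$; this relative irregularity is at most $h^0(F,\Omega^1_F)=1$ and at least $1$ because $\omega\notin f^\ast H^0(C,\Omega^1_C)$, so $T_0$ is an elliptic curve. By Poincar\'e's complete reducibility theorem there is an abelian subvariety $B\subset\Alb(X)$ with $B+T_0=\Alb(X)$ and $B\cap T_0$ finite, hence a surjective homomorphism $p\colon\Alb(X)\to E':=\Alb(X)/B$ onto an elliptic curve whose differential restricts to an isomorphism on the $T_0$-direction. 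I would then prove that $p\circ a\colon X\to E'$ is a smooth morphism. If $\dim a(X)=1$, comparing Albanese varieties forces $q(X)=1$ and $g(C)=0$, and then $a$ itself is a submersion onto the elliptic curve $\Alb(X)$, since $da=\omega$ has no zero. If $\dim a(X)=2$ (which forces $g(C)\ge1$), then $a(X)$ is a connected component of the preimage in $\Alb(X)$ of the smooth image of the Albanese map $C\to\Alb(C)$, hence a smooth surface, invariant under $T_0$. Since $\omega$ has no zero, $da$ is nowhere zero; when $\rk(da_x)=2$ one has $\im(da_x)=T_{a(x)}a(X)\supseteq T_0$, and when $\rk(da_x)=1$ one has $\im(da_x)=T_0$ because the restriction of $a$ to the fibre of $f$ through $x$ is a non-constant---hence, being a morphism of genus-one curves, \'etale---map onto the $T_0$-coset through $a(x)$. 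As $dp$ is injective on the $T_0$-direction, $d(p\circ a)$ never vanishes; and a proper submersion onto a curve is a smooth morphism.

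For (e): $\kappa(X)\ge0$ excludes case (a), and in case (b) the abelian surface $X$ is already of the form $X\times\{\mathrm{pt}\}$. In case (c) I would first observe that $f\colon X\to C$ is isotrivial: it has only multiple smooth fibres, so the $j$-invariant extends holomorphically over the compact curve $C$ and is therefore constant; the rotation part of the local monodromy at a multiple fibre is trivial (otherwise the central fibre could not be a multiple of a smooth elliptic curve), and the global rotation monodromy is trivial in case (c)(ii) because $q(X)=g(C)+1$. Passing to a finite \'etale cover $X_1\to X$---the normalisation of the base change of $f$ along a finite \'etale cover of $C$ trivialising the finite rotation monodromy in case (c)(i), resp.\ along an orbifold cover $\widetilde C\to C$ killing the multiple fibres in case (c)(ii) (the relevant orbifold on $C$ being good: hyperbolic, or of genus $\ge1$)---we obtain $X_1\to\widetilde C$, a torsor under the elliptic curve $E$ (the general fibre of $f$) over a smooth projective curve $\widetilde C$. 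Since $X_1$ is projective, an ample divisor meets the general fibre positively and so is a multisection, whence the class of the torsor in $H^1(\widetilde C,\underline E)$ is torsion; this class is therefore killed by a finite \'etale cover $\widetilde C'\to\widetilde C$, and $X':=X_1\times_{\widetilde C}\widetilde C'\cong E\times\widetilde C'$ is a finite \'etale cover of $X$ splitting as the product of the positive-dimensional abelian variety $E$ with the smooth projective curve $\widetilde C'$, as required.

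The step I expect to be the main obstacle is the smoothness assertion in case (c)(ii) of (d). The obvious candidates fail: $f$ itself has multiple fibres, and the map defined by integrating $\omega$ need not even exist, because the period group of $\omega$ can be dense in $\C$. The argument above bypasses this by using the \emph{horizontal} elliptic quotient $E'$ of $\Alb(X)$ produced by Poincar\'e reducibility; the decisive geometric input is that $a$ is \'etale along the fibres of $f$---a non-constant morphism between elliptic curves is \'etale---which fixes the direction of $da$ exactly where $a$ ramifies, so that the resulting one-form on $X$ has no zero.
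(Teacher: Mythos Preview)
Your argument is correct, and for item (d) it is essentially the paper's proof written out in greater detail: both split off an elliptic factor $E'$ (your $T_0$-quotient, the paper's isogeny factor $E$ of $\Alb(X)$) via Poincar\'e reducibility and then argue that the composite $X\to\Alb(X)\to E'$ is smooth because the reduced fibres of $f$ map by isogenies---hence \'etale---onto $T_0$-cosets. The paper states this in one sentence; you unpack the rank analysis of $da$, which is helpful but not a different idea.

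For item (e) the two arguments genuinely diverge. The paper is very terse: it asserts that after a suitable finite \'etale cover $X'\to X$ one has $\Alb(X')\cong E\times\operatorname{Jac}(C')$ and that the Albanese map then embeds $X'$ as $E\times C'$. You instead argue via isotriviality (constant $j$-invariant on the compact base, finite monodromy) and the torsor description: after killing monodromy and multiple fibres by an \'etale base change, $X_1\to\widetilde C$ is an $E$-torsor whose class is torsion by the multisection coming from an ample divisor, and a further \'etale cover kills the torsion class. Your route is longer but more self-contained, as it does not rely on identifying the Albanese image. The paper's route is quicker but leaves more to the reader---in particular, the claim that the Albanese map is an isomorphism onto $E\times C'$ is exactly the product decomposition one is trying to prove, and implicitly uses the same torsor-triviality that you make explicit. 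One small remark on your version: the step ``the torsor class is torsion, hence killed by a finite \'etale cover'' is correct because an $n$-torsion class lifts to $H^1(\widetilde C,E[n])$, and the associated $E[n]$-torsor is the required \'etale cover; it is worth saying this, since torsion in $H^1(\widetilde C,E)$ does not automatically die under \emph{every} degree-$n$ cover.
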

\begin{proof}
Note that item (\ref{item:smoothmap}) is clear in cases (\ref{item:ruled}), (\ref{item:torus}) and (\ref{item:f-smooth}) of Corollary \ref{cor:classification:surfaces}.
It remains to deal with case (\ref{item:f-quasi-smooth}).
In this case, since $X$ and hence $\Alb(X)$ are projective, $\Alb(X)$ is isogeneous to $E\times \operatorname{Jac}(C)$, where $E$ is an elliptic curve which is isogeneous to the reduction of any fibre of $f$.
It follows that there is a morphism $g:X\to E$ which restricts to an isogeny on the reduction of each fibre of $f:X\to C$.
Since $\omega$ restricts non-trivially to the reduction of any fibre of $f$, the morphism $g:X\to E$ must be smooth, as we want. 

It clearly suffices to prove item (\ref{item:split}) in the case (\ref{item:kod=0,1}) of Corollary \ref{cor:classification:surfaces}.
In this case, there is a finite \'etale cover $X'\to X$, such that $\Alb(X')\cong E\times \operatorname{Jac}(C')$ for a smooth projective curve $C'$ which maps finitely to $C$.
Moreover, the Albanese map identifies $X'$ to the product $E\times C'$, as we want.
This concludes the corollary.
\end{proof}


\section{Proof of Theorem \texorpdfstring{\ref{thm:C=>...}}{1.4}}

\subsection{Preliminaries}
We will use the following lemma.

\begin{lemma} \label{lem:support}
Let $K^\ast$ be a bounded complex of sheaves on a manifold $X$.
Let $Z,Z'\subset X$ be closed subsets with $Z\cap Z'=\emptyset$, such that
$$
\supp \mathcal H^i(K^*)\subset Z\cup Z'
$$ 
for all $i$.
Then the differentials $d_r:E^{p,q}_{r}\to E_r^{p+r,q-r+1}$ in the spectral sequence
$$
E_2^{p,q}=H^p(X,\mathcal H^q(K^*))\Rightarrow \mathbb H^{p+q}(X,K^\ast) 
$$
respect the natural decompositions
$$
E_2^{p,q}=H^p(Z,\mathcal H^q(K^*)|_Z)\oplus H^p(Z',\mathcal H^q(K^*)|_{Z'}) .
$$ 
\end{lemma}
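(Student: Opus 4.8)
The plan is to reduce to the case in which $K^\ast$ itself splits, in the bounded derived category $D^b(X)$, as a direct sum $K^\ast\cong K_Z^\ast\oplus K_{Z'}^\ast$ with $\supp\mathcal H^q(K_Z^\ast)\subset Z$ and $\supp\mathcal H^q(K_{Z'}^\ast)\subset Z'$ for all $q$, and then to invoke the standard functoriality and additivity of the hypercohomology spectral sequence. Once such a splitting is available, the spectral sequence of $K^\ast$ is the direct sum of the spectral sequences of $K_Z^\ast$ and of $K_{Z'}^\ast$, compatibly with every differential $d_r$. Moreover, since $Z\cap Z'=\emptyset$, the decomposition $\mathcal H^q(K^\ast)=\mathcal H^q(K_Z^\ast)\oplus\mathcal H^q(K_{Z'}^\ast)$ must coincide with the splitting of $\mathcal H^q(K^\ast)$ into its summand supported on $Z$ and its summand supported on $Z'$; hence $\mathcal H^q(K_Z^\ast)=i_{Z\ast}(\mathcal H^q(K^\ast)|_Z)$ and $H^p(X,\mathcal H^q(K_Z^\ast))=H^p(Z,\mathcal H^q(K^\ast)|_Z)$, and likewise for $Z'$. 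So the decomposition of $E_2^{p,q}$ induced by the splitting is exactly the natural one in the statement, and all $d_r$ respect it.

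It remains to construct the splitting. Let $j\colon U:=X\setminus Z\hookrightarrow X$ be the open inclusion, set $B:=Rj_\ast j^\ast K^\ast$, and let $A$ be a cocone of the adjunction morphism $K^\ast\to B$, so that there is a distinguished triangle $A\to K^\ast\to B\xrightarrow{\ \delta\ }A[1]$ in $D^b(X)$; all terms are bounded because $X$ has finite cohomological dimension. Restricting to $U$, the morphism $K^\ast|_U\to B|_U=j^\ast K^\ast=K^\ast|_U$ is the identity (as $j^\ast Rj_\ast=\id$), so $A|_U\simeq 0$, i.e.\ $\supp\mathcal H^q(A)\subset Z$ for all $q$. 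On the other hand $\supp\mathcal H^q(B)\subset Z'$: for a point $x\notin Z'$ choose a neighbourhood $V\ni x$ with $V\cap Z'=\emptyset$, so that $\mathcal H^q(B)_x=\varinjlim_V\mathbb H^q(V\cap U,K^\ast)$ and $K^\ast|_{V\cap U}=K^\ast|_{V\setminus Z}$ is acyclic, since on $V$ the cohomology sheaves of $K^\ast$ are supported in $(Z\cup Z')\cap V\subset Z$; hence $\mathcal H^q(B)_x=0$.

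To split the triangle it suffices to show $\delta=0$, and for this I would show $R\mathcal{H}om_X(B,A)=0$. Indeed, its restriction to the open set $U=X\setminus Z$ is $R\mathcal{H}om_U(B|_U,A|_U)$ with $A|_U\simeq 0$, hence $0$; its restriction to the open set $X\setminus Z'$ is $R\mathcal{H}om(B|_{X\setminus Z'},A|_{X\setminus Z'})$ with $B|_{X\setminus Z'}\simeq 0$, hence $0$; and $\{U,\,X\setminus Z'\}$ covers $X$ because $Z\cap Z'=\emptyset$. Therefore $\Hom_{D^b(X)}(B,A[1])=\mathbb H^1(X,R\mathcal{H}om_X(B,A))=0$, so $\delta=0$, the triangle splits, and $K^\ast\cong A\oplus B$. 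Taking $K_Z^\ast:=A$ and $K_{Z'}^\ast:=B$ completes the argument.

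The only real point is obtaining the splitting in $D^b(X)$, as opposed to merely splitting each individual cohomology sheaf $\mathcal H^q(K^\ast)$; this is exactly the vanishing of $R\mathcal{H}om_X(B,A)$ above, whose sole geometric input is the disjointness of the closed sets $Z$ and $Z'$, which makes $\{X\setminus Z,\ X\setminus Z'\}$ an open cover on each member of which one of the two summands becomes acyclic. Everything else is the standard formalism of the hypercohomology spectral sequence attached to the canonical (truncation) filtration, namely its functoriality in $D^b(X)$ and its additivity under direct sums.
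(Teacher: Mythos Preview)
Your argument is correct, but it is more elaborate than necessary. The paper obtains the derived-category splitting in one line by a different and more direct route: with $i:Z\hookrightarrow X$ and $j:Z'\hookrightarrow X$ the \emph{closed} inclusions, the adjunction units give a genuine map of complexes
\[
K^\ast \longrightarrow i_\ast i^{-1}K^\ast \;\oplus\; j_\ast j^{-1}K^\ast,
\]
and one checks on stalks that this is a quasi-isomorphism (over points of $Z$ or $Z'$ it is the identity onto the corresponding summand; over points outside $Z\cup Z'$ both sides are acyclic by hypothesis). Since the hypercohomology spectral sequence depends only on the quasi-isomorphism class of $K^\ast$, the conclusion follows immediately.

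By contrast, you work with the \emph{open} complement $U=X\setminus Z$, form the triangle $A\to K^\ast\to Rj_\ast j^\ast K^\ast\to A[1]$, verify the supports of $A$ and $B$, and then split the triangle via $R\mathcal{H}om_X(B,A)=0$. This is perfectly valid and is a standard recollement-style argument; the step that $B=Rj_\ast j^\ast K^\ast$ is bounded does use finite cohomological dimension of $X$, which you correctly note. The paper's approach avoids $Rj_\ast$ and the splitting-of-triangles step entirely: because $Z$ and $Z'$ are closed, $i_\ast$ and $j_\ast$ are exact and no derived functors are needed. Your route has the virtue of being the kind of argument that generalizes (e.g.\ to situations where one only has an open--closed decomposition rather than two disjoint closed pieces), while the paper's exploits the specific geometry here to give a shorter proof.
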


\begin{proof}
%
Let $i:Z\to X$ and $j:Z'\to X$ be the inclusions.
Then the natural map of complexes
$$
K^*\twoheadrightarrow i_\ast i^{-1}K^\ast \oplus j_\ast j^{-1}K^\ast 
$$
is a quasi-isomorphism.
This proves the lemma, because the spectral sequence depends only on the class of $K^*$ in the derived category of sheaves on $X$.
\end{proof}

\subsection{Item (\ref{item:thm:Z(omega)}) of Theorem \ref{thm:C=>...}}

Let $X$ be a compact connected K\"ahler manifold and let $\omega$ be a holomorphic one-form on $X$ with associated local system $L(\omega)$.
Recall the isomorphism 
$$
H^k(X,L(\omega))\cong \mathbb H^k(X,(\Omega_X^\ast,\omega\wedge-)) .
$$ 
The above hypercohomology is computed by a spectral sequence with $E_2$-page
\begin{align} \label{def:ss:H^qK}
E_2^{p,q}:=H^p(X, \mathcal H^q(K^*))\Rightarrow H^{p+q}(X,L(\omega)) ,
\end{align}
where $K^*:= (\Omega_X^\ast,\omega\wedge-)$ and $\mathcal H^q(K^*)$ denotes the $q$-th cohomology sheaf of that complex.
In particular, $\mathcal H^q(K^*)=0$ if $\omega \wedge-$ is exact on holomorphic $q$-forms and the latter holds if $\omega$ has no zeros.
More precisely, this shows that $\mathcal H^q(K^*)$ are sheaves that are supported on the zero locus $Z(\omega)$ of $\omega$. 

\begin{lemma} \label{lem:H^n(K)}
We have $\mathcal H^n(K^\ast)\cong \Omega^n_X|_Z$.
\end{lemma}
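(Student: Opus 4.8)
\noindent\textit{Proof sketch.}
The plan is to compute $\mathcal H^n(K^\ast)$ as a cokernel and then identify the relevant image sheaf with $\mathcal I_Z\cdot\Omega^n_X$, where $Z:=Z(\omega)$ and $\mathcal I_Z\subset\mathcal O_X$ is its ideal sheaf. Since $K^\ast=(\Omega^\ast_X,\omega\wedge-)$ is concentrated in degrees $0,\dots,n$ with top term $\Omega^n_X$, by definition $\mathcal H^n(K^\ast)=\coker\big(\omega\wedge-\colon\Omega^{n-1}_X\to\Omega^n_X\big)$. I claim that the image of this map equals the subsheaf $\mathcal I_Z\cdot\Omega^n_X\subset\Omega^n_X$. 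Both are globally defined subsheaves of $\Omega^n_X$, so it suffices to verify the equality locally, after which
$$
\mathcal H^n(K^\ast)=\Omega^n_X\big/\big(\mathcal I_Z\cdot\Omega^n_X\big)=\Omega^n_X\otimes_{\mathcal O_X}\mathcal O_Z=\Omega^n_X|_Z
$$
is precisely the canonical identification asserted in the lemma. (If one prefers $Z$ to denote a single connected component of $Z(\omega)$, one obtains the statement by restricting this global isomorphism to that component.)

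For the local verification I would choose holomorphic coordinates $z_1,\dots,z_n$ on an open set $U\subset X$ and write $\omega|_U=\sum_{i=1}^n a_i\,dz_i$ with $a_i\in\mathcal O_X(U)$. By the very definition of $Z(\omega)$ as the vanishing scheme of the section $\omega$ of the bundle $\Omega^1_X$, the ideal $\mathcal I_Z$ is generated over $U$ by $a_1,\dots,a_n$. Now $\Omega^{n-1}_X|_U$ is free with basis $\eta_j:=dz_1\wedge\cdots\wedge\widehat{dz_j}\wedge\cdots\wedge dz_n$ for $1\le j\le n$, and $\Omega^n_X|_U$ is free of rank one on $\eta:=dz_1\wedge\cdots\wedge dz_n$. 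A one-line wedge computation gives $\omega\wedge\eta_j=(-1)^{j-1}a_j\,\eta$, because every term $a_i\,dz_i\wedge\eta_j$ with $i\ne j$ vanishes. Hence the image of $\omega\wedge-$ over $U$ is $(a_1,\dots,a_n)\cdot\eta=\mathcal I_Z|_U\cdot\Omega^n_X|_U$, as claimed.

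I do not expect any serious obstacle here: the whole argument is a local cokernel computation. The only two points requiring care are (i) using the scheme-theoretic meaning of $Z(\omega)$, so that $\mathcal I_{Z(\omega)}$ really is generated locally by the coefficient functions of $\omega$ — this is exactly what makes the wedge computation produce the ideal $\mathcal I_Z$ itself rather than its radical — and (ii) observing that the local identification of image sheaves is canonical and hence glues to a global one. Combining (i) and (ii) with the displayed chain of equalities yields $\mathcal H^n(K^\ast)\cong\Omega^n_X|_Z$.
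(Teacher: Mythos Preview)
Your proposal is correct and follows essentially the same argument as the paper: both compute $\mathcal H^n(K^\ast)$ as the cokernel of $\omega\wedge-:\Omega_X^{n-1}\to\Omega_X^n$, identify the image locally (in coordinates where $\omega=\sum f_i\,dz_i$) with the submodule generated by the $f_i$ times the top form, and conclude $\mathcal H^n(K^\ast)\cong\Omega_X^n/\mathcal I_Z\Omega_X^n=\Omega_X^n|_Z$. Your write-up is in fact slightly more explicit about the basis computation and about why the scheme-theoretic definition of $Z(\omega)$ is what makes the argument work.
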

\begin{proof}
Locally $\omega=\sum_{i=1}^n f_i dx_i$.
We are interested in the cokernel of
$$
\Omega_X^{n-1}\to \Omega_X^n,\ \ \alpha \mapsto \sum_{i=1}^n f_i dx_i\wedge \alpha .
$$
The image of the above map is clearly spanned by $f_i dx_1\wedge \dots \wedge dx_n$ with $i=1,\dots ,n$.
Hence, $\mathcal H^n(K^\ast)$ is the quotient of $\Omega_X^n$ by the subsheaf $I_Z\otimes_{\mathcal O_X}  \Omega_X^n$, where $I_Z$ denotes the ideal sheaf of $Z$.
Hence,
$$
\mathcal H^n(K^\ast)\cong \Omega_X^n\otimes_{\mathcal O_X} \mathcal O_Z = \Omega_X^n|_Z .
$$
This proves the lemma.
\end{proof}

\begin{proof}[Proof of item (\ref{item:thm:Z(omega)}) in Theorem \ref{thm:C=>...}] 
Let $Z\subset Z(\omega)$ be a connected component of the zero locus of $\omega$.
Then we can write $Z(\omega)=Z\cup Z'$, where $Z$ and $Z'$ are disjoint closed subsets of $X$.

Consider the spectral sequence (\ref{def:ss:H^qK}).
By Lemma \ref{lem:H^n(K)}, we have
$$
H^d(X,\Omega_X^n|_Z)\hookrightarrow E^{d,n}_2.
$$
Using Lemma \ref{lem:support}, one easily checks that this term survives on the infinity page and we get
$$
H^d(X,\Omega_X^n|_Z)\hookrightarrow E^{d,n}_\infty.
$$
By Lemma \ref{lem:coho-Lomega}, exactness of $(H^\ast(X,\C),\wedge \omega)$ implies $H^{i}(X,L(\omega))=0$ for all $i$.
Hence, $E^{d,n}_\infty=0$, and so $H^d(Z,\Omega_X^n|_Z)=0$, as we want.
\end{proof}

\begin{corollary}\label{cor:Z^red}
Let $X$ be a compact K\"ahler manifold and let $\omega\in H^0(X,\Omega_X^1)$ 
such that the complex $(H^\ast(X,\C),\wedge \omega)$ given by cup product with $ \omega$ is exact.
Let $Z\subset Z(\omega)$ be a connected component of the zero locus of $\omega$, and let $d=\dim Z$.
Then
$$
H^d(Z',\omega_X|_{Z'})=0 ,
$$
for any irreducible component $Z'$ of the reduced scheme $Z^{\red}$.
\end{corollary}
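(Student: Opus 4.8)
The plan is to reduce the statement to item (\ref{item:thm:Z(omega)}) of Theorem \ref{thm:C=>...}, which we have just established, by exploiting that cohomology in the top degree $d=\dim Z$ is right exact on coherent sheaves supported on $Z$. Recall first that $\omega_X=\Omega_X^n$ with $n=\dim X$, so that item (\ref{item:thm:Z(omega)}) applied to the connected component $Z$ reads $H^d(Z,\omega_X|_Z)=0$.

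Now fix an irreducible component $Z'$ of the reduced scheme $Z^{\red}$. If $\dim Z'<d$, then $H^d(Z',\mathcal F)=0$ for every coherent sheaf $\mathcal F$ on $Z'$, since coherent cohomology on a compact complex space vanishes in degrees above its dimension; so there is nothing to prove, and we may assume $\dim Z'=d$. The inclusion $Z'\hookrightarrow Z$ is a closed immersion, as it factors through $Z^{\red}\hookrightarrow Z$; hence the restriction map $\mathcal O_Z\twoheadrightarrow \mathcal O_{Z'}$ is surjective, and tensoring it over $\mathcal O_X$ with the line bundle $\omega_X$ yields a surjection
$$
\omega_X|_Z\twoheadrightarrow \omega_X|_{Z'}
$$
of coherent sheaves on $X$, both supported on the compact complex space $Z$. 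Let $\mathcal K$ be the kernel, so that $\mathcal K$ is a coherent sheaf with $\supp \mathcal K\subset Z$.

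Since $\dim Z=d$ and $Z$ is compact, we get $H^{d+1}(X,\mathcal K)=H^{d+1}(Z,\mathcal K)=0$. The long exact sequence in cohomology attached to $0\to \mathcal K\to \omega_X|_Z\to \omega_X|_{Z'}\to 0$ then produces a surjection
$$
H^d(Z,\omega_X|_Z)\twoheadrightarrow H^d(Z',\omega_X|_{Z'}),
$$
whose source vanishes by item (\ref{item:thm:Z(omega)}). Hence $H^d(Z',\omega_X|_{Z'})=0$, as desired. There is no serious obstacle here; the one point to be a little careful about is that the vanishing $H^{d+1}(Z,\mathcal K)=0$ must be justified by coherent-cohomology vanishing above the dimension of the compact, possibly non-reduced complex space $Z$---for which one may, if one wishes, filter $\mathcal K$ by powers of the nilradical and reduce to the reduced space $Z^{\red}$---while the passage from $Z$ to its reduced irreducible components costs nothing, being used only through the evident surjections of structure sheaves.
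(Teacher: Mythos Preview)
Your proof is correct and follows essentially the same approach as the paper: both use the short exact sequence obtained from the surjection $\omega_X|_Z\twoheadrightarrow\omega_X|_{Z'}$, invoke item~(\ref{item:thm:Z(omega)}) for the vanishing of $H^d(Z,\omega_X|_Z)$, and use the dimension bound $\dim Z=d$ to kill $H^{d+1}$ of the kernel. The paper writes the kernel explicitly as $\omega_X|_Z\otimes\mathcal I_{Z'}$, while you leave it unnamed as $\mathcal K$, but the argument is identical.
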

\begin{proof}
 Consider the long exact sequence, associated to the short exact sequence
$$
0\longrightarrow \omega_X|_Z\otimes \mathcal I_{Z'} \longrightarrow \omega_X|_Z \longrightarrow \omega_{X}|_{Z'}\longrightarrow 0 .
$$
By item (\ref{item:thm:Z(omega)}), $H^d(Z,\omega_X|_{Z})=0$.
Moreover, $H^{d+1}(Z,\omega_X|_Z\otimes \mathcal I_{Z'})=0$ because of dimension reasons.
This implies $H^d(Z',\omega_X|_{Z'})=0$, as we want.
\end{proof}


\begin{corollary}\label{cor:a(X)}
Let $X$ be a compact K\"ahler manifold with a holomorphic map $f:X\to A$ to a complex torus $A$.
Let $\omega\in H^0(A,\Omega_A^1)$ 
such that the complex $(H^\ast(X,\C),\wedge f^\ast \omega)$ given by cup product with $f^\ast \omega$ is exact.

Then the restriction of $\omega$ to $f(X)\subset \Alb(X)$ does not vanish at a point $y\in f(X)$ such that the fibre $F:=f^{-1}(y)$ is smooth with trivial normal bundle (the locus of such points $y\in f(X)$ is Zariski dense in $f(X)$).
\end{corollary}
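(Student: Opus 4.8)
The plan is to argue by contradiction, transporting the zero scheme of $f^{\ast}\omega$ down to $Y:=f(X)$ via the local submersion structure of $f$ and then invoking the vanishing in Corollary~\ref{cor:Z^red}. Passing to the Albanese is harmless, so we regard $Y\subseteq\Alb(X)$. We may assume $X$ is connected, hence $Y$ irreducible. If $\omega|_Y\equiv 0$ then $f^{\ast}\omega$ vanishes identically --- at a general $x$, $(f^{\ast}\omega)_x=\omega\circ df_x$ factors through $T_{f(x)}Y$ --- so $(H^{\ast}(X,\C),\wedge f^{\ast}\omega)$ is not exact (as $H^0(X,\C)\neq 0$), contrary to hypothesis; thus $\omega|_Y\not\equiv 0$ and $Z(\omega|_Y)\subsetneq Y$ is a proper closed subscheme. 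By Lemma~\ref{lem:coho-Lomega} applied to $X$, we have $H^{\ast}(X,L(f^{\ast}\omega))=0$. Let $G\subseteq Y$ be the dense Zariski-open locus over which $f$ is a smooth morphism onto $Y$ and $Y$ is smooth. For $y\in G$ the fibre $F:=f^{-1}(y)$ is smooth with $N_{F/X}\cong (f|_F)^{\ast}N_{\{y\}/Y}$ trivial --- which proves the parenthetical claim --- so by adjunction $\omega_X|_F\cong\omega_F\otimes\det N^{\vee}_{F/X}\cong\omega_F$, whence $H^{\dim F}(F,\omega_X|_F)\cong H^{\dim F}(F,\omega_F)\neq 0$ by Serre duality. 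After a Stein factorisation we may moreover assume the general fibre of $f$ is connected.

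Suppose now, for contradiction, that $\omega|_Y(y_0)=0$ for some $y_0\in G$. Near $F_0:=f^{-1}(y_0)$ we may write $X\cong F_0\times\Delta$ with $f(x,t)=\phi_x(t)$, each $\phi_x\colon\Delta\to Y$ a local isomorphism onto a neighbourhood of $y_0$. Since $f$ is submersive onto $Y$ there, $f^{\ast}\omega$ vanishes along $F_0$, so $F_0\subseteq Z(f^{\ast}\omega)$, and $Z(f^{\ast}\omega)$ coincides scheme-theoretically with $\phi^{-1}(Z(\omega|_Y))$ near $F_0$. If $y_0$ is an isolated point of $Z(\omega|_Y)$, this forces $Z(f^{\ast}\omega)=F_0$ near $F_0$; being also closed in $X$, $F_0$ is then a clopen --- hence connected, by our reduction --- component of $Z(f^{\ast}\omega)$, and Corollary~\ref{cor:Z^red} gives $H^{\dim F_0}(F_0,\omega_X|_{F_0})=0$, contradicting $\omega_X|_{F_0}\cong\omega_{F_0}$ with $H^{\dim F_0}(F_0,\omega_{F_0})\neq 0$. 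This settles the isolated case, which is the core of the argument.

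In general, choose an irreducible component $B$ of $Z(\omega|_Y)_{\mathrm{red}}$ meeting $G$, of maximal dimension among such, and a general $y\in B\cap G$; set $F:=f^{-1}(y)$ and let $Z_0$ be the irreducible component of $Z(f^{\ast}\omega)_{\mathrm{red}}$ containing $F$. The local picture shows $\dim Z_0=\dim F+\dim B$ and that $g:=f|_{Z_0}\colon Z_0\to B$ is, over the dense open $B\cap G$, a smooth proper family with connected fibre $F$ and $Z_0=g^{-1}(B)$ there. Over the submersive locus $\omega_X=\omega_{X/Y}\otimes f^{\ast}\omega_Y$ and $\omega_{X/Y}|_{Z_0}=\omega_{Z_0/B}$, so $\omega_X|_{Z_0}\cong\omega_{Z_0/B}\otimes g^{\ast}(\omega_Y|_B)$ over $B\cap G$. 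Then Leray for $g$, relative Serre duality ($R^{\dim F}g_{\ast}\omega_{Z_0/B}\cong\OO_B$, fibres connected), and Serre duality on $B$ (with $\omega_B\cong\omega_Y|_B\otimes\det N_{B/Y}$) identify the top cohomology: $H^{\dim Z_0}(Z_0,\omega_X|_{Z_0})\cong H^{\dim B}(B,\omega_Y|_B)\cong H^0(B,\det N_{B/Y})^{\vee}$. Provided this is nonzero and $Z_0$ is top-dimensional in the connected component of $Z(f^{\ast}\omega)$ containing it, Corollary~\ref{cor:Z^red} forces it to vanish, the desired contradiction.

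Securing these two provisos when $\dim B\geq 1$ is the step I expect to be the main obstacle. First, one needs $\det N_{B/Y}$ to be trivial; it is at least $2$-torsion, since the closed form $\omega|_Y$ has along its reduced zero locus $B$ a fibrewise nondegenerate \emph{symmetric} normal Hessian, so $N_{B/Y}\cong N^{\vee}_{B/Y}$. Second, $Z(f^{\ast}\omega)$ can a priori acquire components of larger dimension along the critical locus of $f$, which must be excluded in the connected component of $Z_0$. I would try to bypass both by reducing to the isolated case above: the hypothesis holds for a \emph{general} $\omega'\in f^{\ast}H^0(A,\Omega^1_A)$ as well (the non-exact forms fill a union of proper linear subspaces, none containing $f^{\ast}H^0(A,\Omega^1_A)$ since $\omega$ is exact, cf.\ Remark~\ref{rem:generic-omega}), and a Bertini argument --- the one-forms restricted from $A$ generate $\Omega^1_Y$ on the smooth locus of $Y$ --- makes $Z(\omega'|_Y)$ finite there, so the second paragraph applies to such $\omega'$; it then remains to pass from general $\omega'$ back to the given $\omega$, or else to use the full structure theorem behind Remark~\ref{rem:generic-omega} to control directly both the connected components of $Z(f^{\ast}\omega)$ and the line bundle $\det N_{B/Y}$.
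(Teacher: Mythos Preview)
Your isolated--zero argument (second paragraph) is exactly the paper's proof: assume $\omega|_Y(y)=0$, observe that then $F\subset Z(f^\ast\omega)^{\red}$ is a connected component, and contradict Corollary~\ref{cor:Z^red} via $H^{\dim F}(F,\omega_X|_F)\cong H^{\dim F}(F,\omega_F)\neq 0$ (Serre duality, trivial normal bundle). The paper writes this in three lines and does not separate out the isolated case; you are right that the assertion ``$F$ is a connected component'' is only transparent when $y$ is isolated in $Z(\omega|_Y)$. In the single application of this corollary (the proof of item~(\ref{item:thm:fibred}) of Theorem~\ref{thm:C=>...}) the point $y$ is explicitly arranged to be an isolated zero of $\omega|_Y$, so your second paragraph is precisely what is needed and what the paper actually argues.

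Your paragraphs~3--4 go beyond the paper, and that extension is genuinely incomplete --- as you acknowledge --- and one of the heuristics is wrong. A closed one--form need not have a nondegenerate normal Hessian along its reduced zero locus (e.g.\ $\omega=z_1^2\,dz_1$ on a torus), so the claimed isomorphism $N_{B/Y}\cong N_{B/Y}^{\vee}$ and the $2$--torsion conclusion for $\det N_{B/Y}$ fail in general. Your proposed detour through a general $\omega'$ proves the conclusion for $\omega'$, not for the given $\omega$, so it does not close the gap either. A minor point: the Stein factorisation step is unnecessary (if $F$ is disconnected just work with one component) and awkward, since the Stein factor is no longer a map to a torus. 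None of this affects the paper's argument, which rests entirely on your second paragraph.
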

\begin{proof}
Assume that $\omega$ vanishes at a point $y\in f(X)$ such that the fibre $F:=f^{-1}(y)$ is smooth with trivial normal bundle.
Then $F\subset Z(f^\ast \omega)^{\red}$ is a connected component.
This contradicts Corollary \ref{cor:Z^red}, because
$$
H^{\dim F}(F,\omega_X|_F)=H^{\dim F}(F,\omega_F) 
\neq 0,
$$
by Serre duality, where we used that $F$  has trivial normal bundle.
\end{proof}

\subsection{Item (\ref{item:thm:fibred}) of Theorem \ref{thm:C=>...}}

Let $f:X\to A$ be a holomorphic map to a complex torus $A$ and assume that there is a one-form $\omega \in f^\ast H^0(A,\Omega_A^1)$ such that $(H^\ast(A,\C),\wedge \omega)$ is exact.
Since exactness is an open property, $(H^\ast(A,\C),\wedge \omega')$ is exact for any general $\omega' \in f^\ast H^0(A,\Omega_A^1)$.

Let  $Y:=f(X)$ and fix a general point $y\in Y$.
There are countably many non-trivial linear subspaces
$$
\{0\}\neq W_i\subset T_{A,y}
$$
such that there is a morphism of complex tori $\pi_i:A\to B_i$ with $\ker((d \pi_i)_y)=W_i$.

For a contradiction, we assume that $Y$ is not fibred by tori.
This implies that the tangent space $T_{Y,y} $ does not contain any of the $W_i$.
We may thus choose a one-form $\omega'\in H^0(A,\Omega_A^1)$, such that $\omega'$ vanishes on $T_{Y,y}\subset T_{A,y}$, but which is non-trivial on each $W_i$.
Let $Z\subset Z(\omega')$ be an irreducible component which contains $y$.
Then $\omega'$ vanishes on $Z$ and hence on the subtorus $\langle Z\rangle\subset A$, generated by $Z$. 
If $Z$ was positive-dimensional, then $ T_{\langle Z\rangle,y}=W_i$ for some $i$, which contradicts the fact that $\omega'$ does not vanish on $W_i$.
Hence, $Z$ is zero-dimensional and so $y$ is an isolated zero of $\omega'|_Y$.
But this implies that a small perturbation of $\omega'|_Y$ has an isolated zero in some neighbourhood of $y$.
Hence, a general one-form $\omega \in H^0(A,\Omega_A^1)$ has the property that $Z(\omega|_Y)$ contains a general point of $Y$ as a connected component.
This contradicts Corollary \ref{cor:a(X)}, which finishes the proof.


\appendix

\section*{Appendix, written jointly with Hsueh-Yung Lin}

\setcounter{section}{1}

\setcounter{theorem}{0}

In this appendix we prove the following.

\begin{theorem}\label{thm:appendix}
Let $X$ be a compact connected K\"ahler manifold.
Assume that $\omega\in H^0(X,\Omega_X^1)$ satisfies condition (\ref{item:conj1:exact}). 
Then $ \dim Z(\omega) \le \dim X - 2$.  
\end{theorem}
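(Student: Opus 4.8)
The plan is to leverage the results already established in the body of the paper, in particular item (\ref{item:thm:Z(omega)}) of Theorem \ref{thm:C=>...} together with Corollary \ref{cor:Z^red}, and then to remove the dependence of Theorem \ref{thm:surfaces} on the classification of surfaces by working directly with the zero locus $Z(\omega)$. We already know from Theorem \ref{thm:C=>...}(\ref{item:thm:Z(omega)}) that $\omega$ has no isolated zero, so $\dim Z(\omega) \geq 1$ cannot happen for free; what we must rule out is $\dim Z(\omega) = \dim X - 1$. Suppose for contradiction that some connected component $Z \subset Z(\omega)$ has $d := \dim Z = \dim X - 1$. By Corollary \ref{cor:Z^red}, for every irreducible component $Z'$ of $Z^{\red}$ we have $H^d(Z', \omega_X|_{Z'}) = 0$. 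The strategy is to show this is impossible for a divisorial component.

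First I would pass to the Stein factorization / Albanese picture: by Remark \ref{rem:generic-omega} and the generic vanishing input, after replacing $\omega$ by a general one-form making $(H^\ast(X,\C),\wedge\omega)$ exact, $\omega$ is pulled back from a holomorphic map $f : X \to T$ to a complex torus $T$ with $\dim f(X) \geq 1$; one reduces to the case where $f$ has connected fibres and $\omega$ comes from a one-form on $T$ that is nonzero on $T_{f(X)}$. Then $Z(\omega)$ is a union of fibres of $f$ together with the locus where $f(X)$ is tangent to the hyperplane $\{\omega = 0\}$. The key step is to analyze a codimension-one component $Z'$ of $Z(\omega)^{\red}$: I would argue that, up to the ramification/multiplicity subtleties, $\omega_X|_{Z'}$ is, after adjunction, sufficiently positive that $H^d(Z', \omega_X|_{Z'}) = H^0(Z', \mathcal{O}_{Z'}(-K_X - Z')|_{Z'})^\vee$ by Serre duality is forced to be nonzero, or alternatively that the vanishing $H^d(Z',\omega_X|_{Z'}) = 0$ together with the structure of $Z'$ as a (possibly multiple) fibre over a curve contradicts the exactness of $(H^\ast(X',\C),\wedge\omega')$ on a suitable finite étale cover $X' \to X$ — exactly the mechanism used in Case 3 of the proof of Theorem \ref{thm:surfaces}, where passing to an orbifold cover that trivializes the multiple-fibre structure produces a curve of genus $\geq 2$ and hence a contradiction with $\omega'$ being a pullback of a one-form.

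The main obstacle is controlling the non-reduced structure and the multiple fibres: $Z(\omega)$ may be non-reduced along a divisorial component, and the normal bundle of such a component need not be trivial, so the clean Serre-duality computation of Corollary \ref{cor:a(X)} does not apply directly. I expect the heart of the argument to be a careful local analysis of $\omega$ near a divisorial component $Z'$, writing $\omega$ in local coordinates adapted to $Z'$ and to the map $f$, to determine the vanishing order of $\omega$ along $Z'$ and thereby compute $\mathcal{H}^n(K^\ast)|_{Z'} = \Omega_X^n|_{Z'}$ precisely enough (via Lemma \ref{lem:H^n(K)}) to contradict $H^d(Z', \omega_X|_{Z'}) = 0$. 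Once the divisorial case is excluded, combining with Theorem \ref{thm:C=>...}(\ref{item:thm:Z(omega)}) gives $\dim Z(\omega) \leq \dim X - 2$, as claimed.
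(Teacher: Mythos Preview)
Your proposal identifies the correct contradiction hypothesis (a divisorial component $D\subset Z(\omega)$) and correctly recognizes that the orbifold/\'etale cover mechanism from Case~3 of Theorem~\ref{thm:surfaces} is the engine that ultimately kills multiple fibres. However, there is a genuine gap: you are missing the key reduction that makes that mechanism applicable, namely that the target torus can be taken to be an \emph{elliptic curve}.

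The paper's argument runs as follows. Choose $f:X\to A$ with $\omega=f^\ast\alpha$ and $\dim A$ minimal. Since $\omega|_D=0$, the form $\alpha$ vanishes on the subtorus $\langle f(D)\rangle$, so $\omega$ descends to $A/\langle f(D)\rangle$; minimality forces $f(D)$ to be a point. Now one invokes \cite[Lemma~2.4]{HS} to conclude that $A$ is an elliptic curve, and \cite[Corollary~2.5]{HS} to arrange (via Stein factorization) that all fibres of $f$ are irreducible. At this point Lemma~\ref{lem-int} (the orbifold cover argument you mention) shows the fibres are reduced, and then $Z(\omega)$ sits in the singular locus of a map with reduced fibres over a curve, which has codimension $\ge 2$. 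Note that the paper's proof does \emph{not} use item~(\ref{item:thm:Z(omega)}) of Theorem~\ref{thm:C=>...} or Corollary~\ref{cor:Z^red} at all.

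Your primary line of attack---deriving a contradiction from $H^d(Z',\omega_X|_{Z'})=0$ via Serre duality---is unlikely to succeed as stated. For a smooth divisor $Z'$, adjunction gives $\omega_X|_{Z'}\cong\omega_{Z'}\otimes\mathcal O_{Z'}(-Z')$, so $H^d(Z',\omega_X|_{Z'})\cong H^0(Z',\mathcal O_{Z'}(Z'))^\vee$; this vanishes precisely when the normal bundle of $Z'$ has no sections, which is entirely plausible (and indeed happens for multiple fibres over a curve). Your description of $Z(\omega)$ as ``fibres together with the tangent locus of $f(X)$ to $\{\omega=0\}$'' shows you are still thinking of a higher-dimensional target, where the local analysis you propose has no clear endpoint. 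The decisive missing step is the argument forcing $\dim A=1$.
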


By Theorem \ref{thm:C=>...}, we also have $1\leq \dim Z(\omega)$. 
If $\dim X=2$, the above theorem thus implies $Z(\omega)=\emptyset$, which yields a new proof of Theorem \ref{thm:surfaces}, without using the Enriques-Kodaira classification. 

We start with the following auxiliary result; 
the same argument appeared in the last two paragraphs in the proof of Theorem \ref{thm:surfaces}, as well as in \cite[Proposition 6.4]{HS}.

\begin{lemma}\label{lem-int}
Let $X$ be a compact connected K\"ahler manifold with a morphism $f:X\to E$ to an elliptic curve $E$ with irreducible fibres.
Assume that there is a one-form $\alpha \in H^0(E,\Omega_E^1)$ such that $\omega:=f^\ast \alpha$ satisfies condition (\ref{item:conj1:exact}). 
Then $f$ has reduced fibres.
\end{lemma}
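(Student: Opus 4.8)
The plan is to argue by contradiction: suppose $f\colon X\to E$ has a multiple fibre. The multiple fibres of $f$ determine an orbifold structure on the base $E$, and since $E$ is an elliptic curve this orbifold is "good" (it is developable), so there exists a finite orbifold covering $E'\to E$ trivialising the orbifold structure; see \cite[Corollary 2.29]{CHK00}. Concretely, $E'\to E$ is a ramified covering of smooth curves whose ramification is prescribed precisely by the multiplicities of the multiple fibres, and by Riemann--Hurwitz the presence of any genuine ramification forces $g(E')\geq 2$ (one cannot ramify a map to an elliptic curve without raising the genus). This is the only place where the hypothesis "there is a multiple fibre" is used, and it is the crux of the contradiction.

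Next I would pass to the fibre product and normalise: let $X'$ be the normalisation of $X\times_E E'$, with the induced maps $\tau\colon X'\to X$ and $f'\colon X'\to E'$. By the standard theory of base change for elliptic (more generally, fibred) surfaces/varieties — the analogue of \cite[Proposition III.9.1]{barth-etal} used in the proof of Theorem~\ref{thm:surfaces} — the choice of $E'\to E$ as the orbifold cover arranges that $\tau\colon X'\to X$ is finite \'etale and that $f'\colon X'\to E'$ has no multiple fibres. The one subtlety here, compared with the surface case, is that $X$ need not be a surface, so I should make sure the cited base-change statement is applied correctly; but since we only need $\tau$ to be \'etale and $E'\to E$ to have positive genus, and the \'etaleness of $\tau$ follows formally from $E'\to E$ being the orbifold cover that kills the multiplicities, this should go through verbatim, exactly as in the last two paragraphs of the proof of Theorem~\ref{thm:surfaces}.

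Now I invoke the hypothesis. Since $\tau\colon X'\to X$ is finite \'etale and $\omega=f^\ast\alpha$ satisfies condition (\ref{item:conj1:exact}), the pulled-back form $\omega':=\tau^\ast\omega=f'^\ast(\text{pullback of }\alpha\text{ to }E')$ again satisfies condition (\ref{item:conj1:exact}) — indeed, condition (\ref{item:conj1:exact}) is stable under finite \'etale covers by definition, since \'etale covers of $X'$ are \'etale covers of $X$. In particular the complex $(H^\ast(X',\C),\wedge\omega')$ is exact. But $\omega'$ is the pullback of a holomorphic one-form from the curve $E'$, and $E'$ has genus $\geq 2$; by Lemma~\ref{lem:coho-Lomega} (or directly, because for a pullback form from a curve $C$ of genus $\geq 2$ the wedge complex on $H^\ast(C,\C)$ already fails to be exact at $H^1$, and this failure injects into $H^\ast(X',\C)$ via $f'^\ast$), the complex $(H^\ast(X',\C),\wedge\omega')$ cannot be exact. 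This contradiction shows $f$ has no multiple fibre, i.e. all fibres of $f$ are reduced. The main obstacle is making the base-change/\'etale-cover step clean in arbitrary dimension — ensuring that normalising $X\times_E E'$ really does produce an \'etale cover of $X$ with reduced fibres over $E'$ — but this is precisely the argument already carried out for surfaces, and I would simply cite it and indicate that the same reasoning applies.
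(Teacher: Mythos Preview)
Your proposal is correct and follows essentially the same route as the paper: endow $E$ with the orbifold structure dictated by the multiple fibres, take a good orbifold cover $E'\to E$ (possible because $E$ is elliptic), observe that the normalised base change $X'\to X$ is finite \'etale, and derive a contradiction from the fact that the pulled-back form comes from a curve of genus $\geq 2$. The paper phrases the endgame slightly differently---it just says exactness forces $g(C)=1$ and hence $\Delta=\emptyset$---but the content is identical, and your acknowledged subtlety about the \'etale base change in higher dimension is handled in the paper exactly as you suggest, by citing the surface case and a local computation.
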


\begin{proof}
Let $\Delta$ be the set of points $t \in E$ such that $f^{-1}(t)$ is a multiple fibre and let $m_t$ be its multiplicity. 
This gives rise to an orbifold structure on $E$.
Since $E$ is an elliptic curve, this orbifold sutrcture is good (see e.g.\ \cite[Corollary 2.29]{CHK00}) and so there is a finite cover $C\to E$ which locally above each point of $t\in \Delta$ is ramified of order $m_t$.
A local computation shows that the normalization $\tilde X$ of $X\times_EC$ is \'etale over $X$, cf.\ \cite[Proposition III.9.1]{barth-etal}.
There is a natural map $\tilde f:\tilde X\to C$ and our assumptions imply that 
there is a one-form $\omega\in H^0(C,\Omega_C^1)$ such that $(H^\ast(\tilde X,\C),\wedge \tilde f^\ast \omega)$ is exact.
This implies $g(C)=1$ and so $\Delta = \emptyset$, as we want.  
\end{proof}

\begin{proof}[Proof of Theorem~\ref{thm:appendix}]
Assume for the contrary that there is a prime divisor $D\subset Z(\omega)$.
Let $f:X\to A$ be a morphism to a complex torus such that $\omega=f^\ast \alpha$ for some $\alpha\in H^0(A,\Omega_A^1)$, and assume that $\dim A$ is minimal with that property.

Since $\omega|_D=0$, we have $\alpha|_{\langle f(D)\rangle}=0$, where $\langle f(D)\rangle\subset A$ denotes the subtorus generated by $f(D)$.
Hence, $\omega$ is the pullback of a one-form from $A/\langle f(D)\rangle$.
Minimality of $\dim A$ thus shows that $f(D)$ is a point.
It then follows from \cite[Lemma 2.4]{HS} that $A$ is an elliptic curve.
Moreover, up to replacing $f$ by its Stein factorization, we may  by \cite[Corollary 2.5]{HS} assume that all fibres of $f$ are irreducible. 
Hence, $f$ has reduced fibres by Lemma~\ref{lem-int}.  
Since $A$ is an elliptic curve, $Z(\omega)$ is contained in the singular locus of $f$, which has codimension at least two, because the fibres of $f$ are reduced.
This is a contradiction, which concludes the theorem.
\end{proof}


\begin{thebibliography}{HKLR}   

\bibitem[Ar92]{Ar}
D.\ Arapura, {\em Higgs Line Bundles, Green--Lazarsfeld Sets and Maps of K\"ahler Manifolds to Curves}, Bull.\ of the A.M.S. \textbf{26} (1992), 310--314.


\bibitem[BHPV04]{barth-etal}
W.\ Barth, K.\ Hulek, C.A.M.\ Peters and A.\ van de Ven, {\em Compact complex surfaces}, Springer, Berlin 2004.

\bibitem[BWY16]{BWY}
N.\ Budur, B.\ Wang and Y.\ Yoon, {\em Rank One Local Systems and Forms of Degree One}, Int.\ Math.\ Res.\ Notices \textbf{13} (2016), 3849--3855.

\bibitem[CHK00]{CHK00}
D.\ Cooper, C.D.\ Hodgson and S.P.\ Kerckhoff, {\em Three-dimensional orbifolds and cone-manifolds}, MSJ Memoirs \textbf{5}, Tokyo: Mathematical Society of Japan (MSJ), 2000.





\bibitem[DJL17]{DJL}
O.\ Debarre, Z.\ Jiang and M.\ Lahoz, {\em Rational cohomology tori}, Geometry \& Topology \textbf{21} (2017), 1095--1130.



\bibitem[EL97]{EL}
L.\ Ein and R.\ Lazarsfeld, {\em Singularities of theta divisors and the birational geometry of irregular varieties}, J.\ Amer.\ Math.\ Soc.\ \textbf{10} (1997), 243--258

\bibitem[Fa04]{Fa}
M.\ Farber, {\em Topology of Closed One-Forms},  Mathematical Surveys and Monogrophs \textbf{108}, AMS, Rhode Island, 2004.



\bibitem[GL87]{GL} M.\ Green and R.\ Lazarsfeld, {\em Deformation theory, generic vanishing theorems, and some conjectures of Enriques, Catanese and Beauville}, Inv.\ Math.\ \textbf{90} (1987), 389--407.

\bibitem[GL91]{GL2}
M.\ Green and R.\ Lazarsfeld, {\em Higher obstructions to deforming cohomology groups of line bundles}, J.\ Amer.\ Math.\ Soc.\ \textbf{4} (1991), 87--103.




\bibitem[HS19]{HS}
F.\ Hao and S.\ Schreieder, {\em Holomorphic one-forms without zeros on threefolds}, Preprint 2019, \url{http://www.mathematik.uni-muenchen.de/~schreied/1-forms_3-folds.pdf}.


\bibitem[Ko13]{Ko}
D.\ Kotschick, {\em Holomorphic one-forms, fibrations over the circle, and characteristic numbers of K\"ahler manifolds}, Preprint 2013.

%
%
%


\bibitem[Pa87]{Pa}
A.V.\ Pajitnov, {\em An analytic proof of the real part of Novikov's inequalities}, Soviet Math.\ Dokl.\ \textbf{35} (1987), 456--457.


\bibitem[Si93]{Si}
C.\ Simpson, {\em Subspaces  of  moduli  spaces  of  rank  one  local  systems},  Ann.\
Sci.\ \'Ecole Norm.\ Sup.\ \textbf{26} (1993), 361--401.


\bibitem[Ti70]{Ti}
D.\ Tischler, {\em On fibering certain foliated manifolds over $S^1$}, Topology \textbf{9} (1970), 153--154.

\bibitem[Wa16]{Wa}
B.\ Wang, {\em Torsion points on the cohomology jump loci of compact K\"ahler manifolds}, Math.\ Res.\ Lett.\ \textbf{23} (2016), no.2, 545--563.


\end{thebibliography}
\end{document}